\documentclass[10pt]{article}
\usepackage[hidelinks]{hyperref}
\usepackage{amsmath}
\usepackage{amsfonts}
\usepackage{amssymb}
\usepackage{amsthm}

\usepackage[shortlabels]{enumitem}
\usepackage{microtype}
\usepackage{fullpage}
\usepackage{mathrsfs}
\usepackage{mleftright}
\usepackage{tikz}
\mleftright

\NewDocumentCommand{\R}{}{\mathbb{R}}

\NewDocumentCommand{\N}{}{\mathbb{N}}

\NewDocumentCommand{\CSpace}{o}{C\IfValueT{#1}{(#1)}}
\NewDocumentCommand{\CinftySpace}{o}{C^\infty\IfValueT{#1}{(#1)}}
\NewDocumentCommand{\CjSpace}{m o}{C^{#1}\IfValueT{#2}{(#2)}}
\NewDocumentCommand{\CinftycptSpace}{o}{C^\infty_{\mathrm{cpt}}\IfValueT{#1}{(#1)}}

\NewDocumentCommand{\Lozo}{}{L^1([0,1])}

\NewDocumentCommand{\LpSpace}{m o}{L^{#1}\IfValueT{#2}{(#2)}}
\NewDocumentCommand{\LinftySpace}{o}{\LpSpace{\infty}[#1]}

\NewDocumentCommand{\Imaginary}{}{\mathrm{Im}\: }
\NewDocumentCommand{\Real}{}{\mathrm{Re}\: }

\NewDocumentCommand{\Gt}{}{\widetilde{G}}

\newif\ifconversationdraft
\conversationdraftfalse  

\ifconversationdraft

  \newenvironment{promptbox}
    {%
      \par\medskip
      \begingroup
      \small
      \noindent\textbf{Prompt}\par
      \smallskip
      \noindent\ignorespaces
    }
    {%
      \par
      \endgroup
      \medskip
    }

\else

  \usepackage[most]{tcolorbox}
  \usepackage{xcolor}

  \definecolor{PromptFrame}{HTML}{2F5597}
  \definecolor{PromptBack}{HTML}{F4F8FF}

  \tcbset{
    promptstyle/.style={
      enhanced,
      breakable,
      boxrule=0.5pt,
      arc=2mm,
      left=8pt,
      right=8pt,
      top=7pt,
      bottom=7pt,
      before skip=10pt,
      after skip=10pt,
      fonttitle=\bfseries,
      colback=PromptBack,
      colframe=PromptFrame,
      title={Prompt}
    }
  }

  \newtcolorbox{promptbox}{promptstyle}

\fi

\begin{document}

\newtheorem{theorem}{Theorem}[section]
\newtheorem{corollary}[theorem]{Corollary}
\newtheorem{proposition}[theorem]{Proposition}
\newtheorem{lemma}[theorem]{Lemma}
\newtheorem{conjecture}[theorem]{Conjecture}
\newtheorem{problem}[theorem]{Problem}

\theoremstyle{remark}
\newtheorem{remark}[theorem]{Remark}

\theoremstyle{definition}
\newtheorem{definition}[theorem]{Definition}

\theoremstyle{remark}
\newtheorem{example}[theorem]{Example}

\numberwithin{equation}{section}
\numberwithin{figure}{section}

\title{Non-uniqueness for a differential equation and a proof by ChatGPT}

\author{Brian Street}
\date{}

\maketitle

\begin{abstract}
  Let $f(t,x),M(t,x)\in C([0,1]^2)$ with $M(t,x)>0$. We consider differential equations of the form
\[
    \frac{\partial f}{\partial t}(t,x)=\frac{M(t,x)f(t,x)-M(t,0)f(t,0)}{x},\quad x>0.
\]
For a fixed positive weight $M$, we ask whether the condition $f(0,x)=0$ forces $f\equiv 0$.
We show the answer is negative for smooth functions: 
there exist $f(t,x),M(t,x)\in C^{\infty}([0,1]^2)$ with $f(0,x)=0$, $f(t,0)\not\equiv 0$, and $M(t,x)>0$
 satisfying the above equation. However, we show that for a large class of $M(t,x)$, the equation 
 does have uniqueness. 
 We relate this to uniqueness/non-uniqueness theorems for weighted Laplace transforms.
 A key example originated in an output by ChatGPT-5.5-Pro, and we include a discussion 
 of its output as well as a complete proof.
\end{abstract}

\section{Introduction}
Let \(f(t,x),M(t,x)\in \CSpace[\lbrack 0,1\rbrack^2]\)
with \(M(t,x)>0\).
Consider the following linear differential equation
for \(f(t,x)\)
\begin{equation}\label{Eqn::Intro::MainDiffEq}
    \frac{\partial}{\partial t} f(t,x)
    =\frac{M(t,x)f(t,x)-M(t,0)f(t,0)}{x},\quad x>0.
\end{equation}
We investigate whether this differential equation
has uniqueness: if \(f(t,x)\) satisfies\footnote{Throughout the paper, we say a function
satisfies a differential equation like \eqref{Eqn::Intro::MainDiffEq} if the derivative
exists in the classical sense, and the equation holds.} 
\eqref{Eqn::Intro::MainDiffEq}
for a given \(M(t,x)\) and \(f(0,x)=0\), must \(f(t,x)=0\), \(\forall t,x\)?
We show that there exist smooth \(M\) and \(f\) such that
uniqueness does not hold; however uniqueness does hold for a large
class of \(M\).
In Section \ref{Section::InverseProblems}, we relate this to inverse problems.
The key new example (Theorem \ref{Thm::Intro::PaleyWiener}\ref{Item::Intro::PaleyWiener::FailureSmooth}) was constructed by 
ChatGPT-5.5-Pro in a single shot;\footnote{We did
attempt to understand the setting covered in Theorem \ref{Thm::Intro::PaleyWiener}\ref{Item::Intro::PaleyWiener::FailureSmooth}
before asking ChatGPT. In fact, the author tried to understand this setting when working
on the paper \cite{StreetDifferentialEquationsWithDifferenceQuotient}.  Being unable to
solve the general situation, the author posted a question on MathOverflow \cite{street2013quantitative}
in January 2013
which is identical to the question asked to ChatGPT as described in Section \ref{Section::ChatGPT}. As of this writing, this MathOverflow question has not received any responses.}
though the conclusion has been checked and the proof  rewritten by the author in Section \ref{Section::PW::NonUnique}.\footnote{For recent discussions of LLMs as mathematical or scientific assistants, including the need for expert verification, see \cite{BubeckEtAl25}.} 
In Section \ref{Section::ChatGPT} we present ChatGPT's original
output and add some commentary. Interestingly, like a hurried
expert, ChatGPT skipped some technically important steps
whose proofs were routine, \emph{without even acknowledging the
steps that were skipped}. Nevertheless, as shown by our
own writeup of Theorem \ref{Thm::Intro::PaleyWiener}\ref{Item::Intro::PaleyWiener::FailureSmooth}, the 
needed details can be added. 
The reader only interested in our analysis of ChatGPT's output
can skip straight to Section \ref{Section::ChatGPT}.

The next theorem (our main theorem) contrasts lack
of uniqueness for \eqref{Eqn::Intro::MainDiffEq} for general 
\(M(t,x)\), with uniqueness for a large class of \(M(t,x)\);
the uniqueness part is a consequence of \cite[Theorem 2.5]{StreetDifferentialEquationsWithDifferenceQuotient}.

\begin{theorem}\label{Thm::Intro::MainUniqueness}
    \begin{enumerate}[(I)]
        \item\label{Item::Intro::MainUniqueness::NonUnique} There exists \(M(t,x),f(t,x)\in \CinftySpace[\lbrack 0,1 \rbrack^2]\)
            satisfying  \eqref{Eqn::Intro::MainDiffEq},
            with \(M(t,x)>0\), \(\forall t,x\),
            \(f(t,0)\not\equiv 0\),
            and \(f(0,x)=0=f(1,x)\), \(\forall x\).
        \item\label{Item::Intro::MainUniqueness::Unique} Fix \(\epsilon>0\). Suppose
        \begin{equation}\label{Eqn::Intro::LaplaceM}
            M(t,x)=\frac{1}{x}\int_0^\epsilon e^{-w/x}b(t,w)\:dw + e^{-\epsilon/x} G(t,x),
        \end{equation}
        with \(M\) extended continuously to \(x=0\) \((M(t,0)=b(t,0))\),
        where \(b\in \CjSpace{3}[\lbrack 0,1\rbrack \times \lbrack 0,\epsilon\rbrack]\),
        \(b(t,0)>0\), \(\forall t\), and \(G\in \CSpace[\lbrack 0,1\rbrack^2]\). 
        Suppose \(f\in \CSpace[\lbrack 0,1\rbrack^2]\) satisfies \eqref{Eqn::Intro::MainDiffEq},
        has \(f(0,x)=0\), and \(f(t,0)\in \CjSpace{2}[\lbrack 0,1\rbrack]\). Then \(f(t,x)=0\), \(\forall t,x\).
        Note, \(M(t,0)=b(t,0)\) is positive, 
        and therefore \(M(t,x)\) is positive for \(x\) small,
        but we do not assume positivity away from \(x=0\).
    \end{enumerate}
\end{theorem}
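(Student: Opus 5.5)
The plan rests on one reduction. Write \(g(t):=M(t,0)f(t,0)\) and \(A(t,x):=\int_0^t M(r,x)\,dr\). For each fixed \(x>0\), \eqref{Eqn::Intro::MainDiffEq} is a scalar linear ODE in \(t\) with forcing \(-g(t)/x\). If \(f(0,x)=0\), variation of constants gives
\begin{equation*}
f(t,x)=-\frac1x\, e^{A(t,x)/x}\int_0^t e^{-A(s,x)/x} g(s)\,ds .
\end{equation*}
So \(f(1,x)=0\) for all \(x\) is equivalent to the vanishing of the weighted Laplace transform
\begin{equation*}
\mathcal{L}(x):=\int_0^1 e^{-A(s,x)/x}g(s)\,ds .
\end{equation*}
When \(\mathcal{L}\equiv 0\), the same \(f\) can be written backwards as
\begin{equation*}
f(t,x)=\frac1x\int_t^1 e^{-(A(s,x)-A(t,x))/x}g(s)\,ds .
\end{equation*}
Since \(M>0\), the exponent in this backward formula is decaying, so this form is well behaved as \(x\to 0\).

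\textbf{Part (I).} I would construct smooth \(M>0\) and smooth \(g\not\equiv 0\) with \(\mathcal{L}\equiv 0\) on \((0,1]\), and then define \(f\) by the backward formula. To check that \(f\in\CinftySpace[\lbrack 0,1\rbrack^2]\) and that \(f(t,0)=g(t)/M(t,0)\), I would:
\begin{enumerate}[(i)]
\item take \(g\) flat at \(s=0\) and \(s=1\);
\item substitute \(u=(A(s,x)-A(t,x))/x\), which turns the integral into \(\int_0^{(A(1,x)-A(t,x))/x} e^{-u}\, g(s(t,t+\cdot))\ldots\,du\) with a smooth integrand in \((t,x,u)\);
\item apply Watson's lemma, which gives smoothness up to \(x=0\) and the leading term \(g(t)/M(t,0)\).
\end{enumerate}
This shows that \(f\) solves \eqref{Eqn::Intro::MainDiffEq}, including the consistency condition at \(x=0\), with \(f(0,x)=0=f(1,x)\).

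\textbf{Constructing \(M\) and \(g\): the main obstacle.} The constraint \(M(t,0)>0\) forces \(A(s,x)\ge cs\). Hence any cancellation in \(\mathcal{L}(x)\) as \(x\to0\) cannot come from balancing two separated bumps of \(g\). It must come from \(g\) being oscillatory, with Laplace transform decaying faster than its support alone dictates. This is where a Paley--Wiener or non-quasianalytic construction enters. My first attempt would be as follows.
\begin{enumerate}[(a)]
\item Take \(M_0\equiv 1\), and pick \(g\) supported in \([\delta,1-\delta]\), smooth and flat at both ends, such that \(\int e^{-s\lambda}g(s)\,ds\) decays much faster than \(e^{-\delta\lambda}\) as \(\lambda=1/x\to\infty\). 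A Gevrey-class or Paley--Wiener-type choice should give something like \(e^{-\delta\lambda-\lambda^{\alpha}}\).
\item Perturb \(A(s,x)=s+\tau(x)\varphi(s)\), with \(\varphi\) a fixed bump, and solve the scalar equation \(\mathcal{L}(x)=0\) for \(\tau(x)\) by the implicit function theorem.
\item The derivative of \(\mathcal{L}\) in \(\tau\) is \(-\lambda\int\varphi\, e^{-s\lambda}g\). I need it to dominate the remainder; that is, \(\varphi\) and \(g\) must be chosen so that this weighted integral decays strictly slower than the Laplace transform of \(g\) itself. Then \(\tau(x)\) is \(O(x^\infty)\), indeed flat at \(x=0\), and \(M=\partial_sA\) stays smooth and positive.
\end{enumerate}
Arranging this gap between the two decay rates is the crux, and I expect it to need the most care.

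\textbf{Part (II).} This is asserted to follow from \cite[Theorem 2.5]{StreetDifferentialEquationsWithDifferenceQuotient}, so I would verify its hypotheses. The structure \eqref{Eqn::Intro::LaplaceM} says that, up to an \(e^{-\epsilon/x}\) error, \(M\) is a Laplace transform in \(1/x\) of a \(\CjSpace{3}\) density \(b\) with \(b(t,0)>0\). In the representation above, the vanishing of \(\mathcal{L}\) then becomes a Laplace-transform identity near \(\lambda=\infty\). Inverting it, and using \(g\in\CjSpace{2}\), should force \(g\) to vanish on an initial interval; iterating, or a continuity argument in \(t\), gives \(g\equiv0\), and then \(f\equiv0\) by the variation-of-constants formula.
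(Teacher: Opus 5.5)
\textbf{Part (I).} Your reduction to the vanishing of $\mathcal{L}(x)=\int_0^1 e^{-A(s,x)/x}g(s)\,ds$, the backward formula, and the smoothness argument for $f$ are sound. You also need the final rescaling $x\mapsto\epsilon x$, $M\mapsto\epsilon^{-1}M(t,\epsilon x)$, since $\mathcal{L}$ will only vanish for small $x$.

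The gap is exactly the step you flag as the crux. You never produce $g,\varphi$ for which $\int e^{-s\lambda}g$ is negligible compared with $\lambda\int e^{-s\lambda}\varphi g$, and the ingredients you propose cannot produce it. The decay in (a) is not the relevant property: every nonnegative $g$ flat at $\delta$ already has $\int e^{-s\lambda}g\approx e^{-\delta\lambda-c\lambda^{\alpha}}$. Yet if $g$ has one sign near the left end of its support, then $|\int e^{-s\lambda}\varphi g|\lesssim\int e^{-s\lambda}g$ for every bounded $\varphi$, and no positive weight can make $\mathcal{L}$ vanish. The oscillation must accumulate at the left endpoint. A fixed bump $\varphi$ also does not break the cancellation: if $\varphi\equiv 1$ on $\operatorname{supp}g$, then $\mathcal{L}$ is simply multiplied by $e^{-\tau\lambda}$ and never vanishes.

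The missing idea is to correlate the perturbation with $g$, namely $\varphi=g$. Then $\partial_\tau\mathcal{L}|_{\tau=0}=-\lambda\int e^{-s\lambda}g^2$ is the Laplace transform of a nonnegative function and cannot cancel, while $g$ is chosen so that its own transform cancels strongly. The paper takes $g=b=\chi(s)e^{-1/s^2}\sin(s^{-4})$.
\begin{itemize}
\item With $u=1/s$ and the contour rotated to $\arg u=\pi/8$ (where $e^{iu^4}$ becomes $e^{-r^4}$), $\int_0^1 e^{-s/x}b\,ds$ and all its $x$-derivatives are $O(e^{-cx^{-4/5}})$. The inner arc $|u|=4$ contributes only $O(e^{-d/x})$, and it must be estimated with its derivatives too.
\item The saddle point of $2u^2+1/(xu)$ at $u\approx x^{-1/3}$ gives $\int_0^1 e^{-s/x}b^2\,ds\gtrsim e^{-Cx^{-2/3}}$.
\end{itemize}
Since $4/5>2/3$ the quotient is flat. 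To get $C^\infty$ at $x=0$ you additionally need $|\partial_x^k|$ of the reciprocal of the second integral to be $\lesssim e^{A_k x^{-2/3}}$.

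\textbf{Avoiding the implicit function step.} The paper sidesteps your implicit-function argument, which would be delicate here. The $\tau$-derivative tends to $0$ (it is only bounded below by $\lambda e^{-C\lambda^{2/3}}$), and you would still have to prove $\tau\in C^\infty$ and flat at $x=0$. Instead the paper sets $e^{-A(s,x)/x}=e^{-s/x}(1+\eta(x)b(s))$, i.e.\ $M=1-x\partial_s\log(1+\eta b)$; to first order this is your ansatz with $\tau\varphi=-x\eta b$. The condition is then linear in $\eta$, and $\eta=-(\int_0^1 e^{-s/x}b\,ds)/(\int_0^1 e^{-s/x}b^2\,ds)$ explicitly.

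\textbf{Part (II).} Invoking \cite[Theorem 2.5]{StreetDifferentialEquationsWithDifferenceQuotient} is exactly the paper's proof. The paper takes $P(t,x,y,z)=c(t,x)y$ with $c(t,x)=x^{-1}\int_0^\epsilon e^{-w/x}b(t,w)\,dw$, and $\Gt(t,x,y)=x^{-1}e^{-\epsilon/x}G(t,x)y=e^{-\gamma/x}[x^{-1}e^{-(\epsilon-\gamma)/x}G(t,x)y]$. However, your heuristic misreads the hypotheses: only $f(0,x)=0$ is assumed, so $\mathcal{L}$ need not vanish. Boundedness of $f$ only gives $|\int_0^t e^{-A(s,x)/x}g(s)\,ds|\lesssim x e^{-A(t,x)/x}$, a weighted Paley--Wiener condition. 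Part (I) shows such a condition does not force $g\equiv 0$ for general smooth positive $M$; the Laplace structure \eqref{Eqn::Intro::LaplaceM} is what the cited theorem exploits.
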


\begin{remark}
    The class of \(M\) given by \eqref{Eqn::Intro::LaplaceM} is rather large. 
    It clearly contains all functions which are positive at \(x=0\), smooth in \(t\), and real analytic in \(x\). 
    In fact, by taking \(b\) analytic in \(w\),
    it contains all functions which are positive at \(x=0\), smooth in \(t\), and which are Gevrey-1, Borel summable in \(x\)
    (for a background on Borel summation, see \cite{BalserFromDivergentPowerSeriesToAnalyticFunctions}).
    More surprisingly, the class contains examples with arbitrary prescribed Taylor series at \(x=0\), with positive zeroth term
    (since Taylor series in \(w\) of \(b(t,w)\) are in bijective correspondence with
    Taylor series in \(x\) of \(M(t,x)\)). The 
    \(e^{-\epsilon/x}G(t,x)\)
  term also allows arbitrary continuous modifications away from x=0, after inserting a transition region. It does not, however, contain every smooth function with 
  a prescribed Taylor series; the allowed flat remainder is restricted to be 
  \(O(e^{-\epsilon/x})\).
  %
    Theorem \ref{Thm::Intro::MainUniqueness}\ref{Item::Intro::MainUniqueness::NonUnique}
    shows that uniqueness does not hold for all smooth \(M(t,x)\).
\end{remark}

The key part of the proof of Theorem \ref{Thm::Intro::MainUniqueness}\ref{Item::Intro::MainUniqueness::NonUnique}
is the failure of a Paley--Wiener type result for the Laplace transform in a smooth setting. 
In the next theorem, we present this and contrast it with a (well-known) version that
gives uniqueness for a similarly large class as Theorem \ref{Thm::Intro::MainUniqueness}\ref{Item::Intro::MainUniqueness::Unique}. 

\begin{theorem}\label{Thm::Intro::PaleyWiener}
    \begin{enumerate}[(I)]
        \item\label{Item::Intro::PaleyWiener::FailureSmooth} There exists \(Q(s,x)\in \CinftySpace[\lbrack 0,1\rbrack^2]\)
            with \(Q(s,x)>0\), \(\forall s,x\), \(Q(s,x)=1\) for \(s\geq 1/3\),
            and
            \(b(s)\in\CinftycptSpace[\lbrack 0,1/3)]\)
            with \(b\not \equiv 0\) and
            \begin{equation}\label{Eqn::Intro::PaleyWiener::FailureSmooth::MainVanishing}
                \int_0^1 e^{-s/x}Q(s,x) b(s)\:ds =0\text{ for }x\text{ sufficiently small.}
            \end{equation}
        \item\label{Item::Intro::PaleyWiener::Unique} Let \(G(t,x)\in \LinftySpace[\lbrack 0,1\rbrack^2]\), \(b(t,s)\in \CSpace[\lbrack 0,1\rbrack^2]\)
            with \(b(t,0)>0\) and \(\partial_s b(t,s)\in \CSpace[\lbrack 0,1\rbrack^2]\). 
            Set \(Q(t,x):=\frac{1}{x}\int_0^1 e^{-s/x}b(t,s)\: ds+e^{-1/x}G(t,x)\). 
            Suppose \(a\in \Lozo\) with \(\int_0^1 e^{-t/x}Q(t,x)a(t)\: dt = O(e^{-1/x})\) for \(1/x\in \N\). 
            Then, \(a=0\) almost everywhere.
    \end{enumerate}
\end{theorem}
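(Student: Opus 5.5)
\textbf{Part \ref{Item::Intro::PaleyWiener::Unique}.} This part follows from a Fubini computation, a Stieltjes-transform argument and a Volterra equation, so I would do it first. Write \(x=1/n\) with \(n\in\N\).

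The \(G\)-term contributes \(e^{-n}\int_0^1 e^{-nt}G(t,1/n)a(t)\,dt=O(e^{-n})\), since \(G\in\LinftySpace\) and \(a\in\Lozo\). For the main term, Fubini and the substitution \(u=t+s\) give
\[
n\int_0^1\!\!\int_0^1 e^{-n(t+s)}b(t,s)a(t)\,ds\,dt = n\int_0^2 e^{-nu}F(u)\,du,
\qquad F(u):=\int_{\max(0,u-1)}^{\min(u,1)} a(t)\,b(t,u-t)\,dt,
\]
where \(F\in\LpSpace{1}\). The portion \(u\ge 1\) is \(O(e^{-n})\) up to a factor \(n\). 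Hence \(\int_0^1 e^{-nu}F(u)\,du=O(e^{-n})\) after absorbing the factor \(n\); any polynomial loss is harmless below.

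Now set \(y=e^{-u}\) and \(\phi(y)=F(-\log y)/y\) on \([e^{-1},1]\). The moments satisfy \(m_n:=\int_{e^{-1}}^1 y^n\phi(y)\,dy=O(n^k e^{-n})\). So \(\sum_n m_n w^n\) is holomorphic in \(|w|<e\). For \(|w|<1\) this series equals \(\int \phi(y)(1-wy)^{-1}\,dy\), which is holomorphic off the ray \(\{w: w^{-1}\in[e^{-1},1]\}=[1,e]\).

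The identity therefore continues analytically across the cut \([1,e]\). By the Sokhotski--Plemelj / Stieltjes inversion formula, the jump across the cut equals \(\phi\) in the distributional sense, so \(\phi=0\) and \(F=0\) a.e.\ on \([0,1]\).

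For \(u\in[0,1]\) we have \(F(u)=\int_0^u a(t)b(t,u-t)\,dt=0\), a first-kind Volterra equation. Because \(\partial_s b\) is continuous, differentiating (in the sense of absolutely continuous functions) gives the second-kind equation
\[
b(u,0)a(u)+\int_0^u a(t)\,\partial_s b(t,u-t)\,dt=0 .
\]
Since \(b(u,0)>0\), dividing through and applying the standard Volterra/Gronwall argument in \(\Lozo\) gives \(a=0\) a.e.

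\textbf{Part \ref{Item::Intro::PaleyWiener::FailureSmooth}.} I would use a perturbative ansatz
\[
Q(s,x)=1-\frac{L(x)\,\beta(s,x)}{\int_0^1 e^{-\sigma/x}\beta(\sigma,x)b(\sigma)\,d\sigma},\qquad L(x):=\int_0^1 e^{-s/x}b(s)\,ds .
\]
This forces \eqref{Eqn::Intro::PaleyWiener::FailureSmooth::MainVanishing} identically. Here \(\beta\ge0\) is supported where \(b>0\) and inside \([0,1/3)\), which gives \(Q=1\) for \(s\ge1/3\). Two things must then be arranged:
\begin{itemize}
\item[(i)] \(Q>0\), in particular at \(x=0\);
\item[(ii)] the correction term is \(C^\infty\) on \([0,1]^2\), which requires it to be flat as \(x\to0\).
\end{itemize}

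Because \(Q(s,0)>0\) is forced, the leading asymptotics of the integral cannot cancel at a single bump. I would therefore take \(b=\sum_k(-1)^k\varepsilon_k\psi_k\), built from bumps \(\psi_k\ge0\) at points \(s_k\downarrow0\), with \(\varepsilon_k\) decaying fast enough that \(b\) is smooth and flat at \(0\).

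The amplitudes and signs would be tuned so that \(|L(x)|\) is much smaller, by a factor \(O(x^\infty)\), than \(\int e^{-s/x}\beta b\). Here \(\beta(\cdot,x)\) is a bump adapted to the \(k\) for which \(e^{-s_k/x}\varepsilon_k\) is maximal. This is the Laplace-method ``dominant bump'' scale, and I would interpolate smoothly in \(x\) between consecutive scales.

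\textbf{Main obstacle.} The hardest step is the quantitative bookkeeping in part \ref{Item::Intro::PaleyWiener::FailureSmooth}. One must show that the ratio \(L(x)\beta/\int e^{-\sigma/x}\beta b\), together with all its \(s\)- and \(x\)-derivatives, is \(O(x^N)\) for every \(N\). This gives smoothness at \(x=0\) and \(Q\to1\) there, and for \(x\) small it also gives \(Q>0\). Everything is only required for small \(x\), so the construction can be cut off smoothly to \(Q=1\) for larger \(x\).
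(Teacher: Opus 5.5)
Your Part (II) is correct. The convolution step and the Volterra--Gr\"onwall step are the paper's. Your generating-function argument is a valid self-contained substitute for the cited discrete Paley--Wiener proposition: decay $m_n=O(n^k e^{-n})$ forces the Cauchy transform of $\phi\in L^1([e^{-1},1])$ to continue holomorphically across $(e^{-1},1)$, so its Plemelj jump, a nonzero multiple of $\phi$, vanishes.

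\textbf{The gap is in Part (I).} With $\beta=b$, your ansatz is exactly the paper's $Q=1-B(x)b(s)/M(x)=1+\eta_0(x)b(s)$. The ansatz itself is the easy part. The whole content of the theorem is producing a $b$ for which $L(x)=\int_0^1 e^{-s/x}b$ is smaller than the denominator by a flat factor, with all $x$-derivatives. You do not construct such a $b$, and the mechanism you sketch does not give one.

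Consider first the regime your ``dominant bump'' heuristic describes, where a single term $\varepsilon_k\int e^{-s/x}\psi_k$ dominates $L(x)$. Then $|L(x)|$ is comparable to $\int e^{-s/x}\beta b$, and the ratio is of size $1$. So $Q\approx 1\mp\beta$ on a bump sliding toward $s=0$ as $x\to0$. This $Q$ neither tends to $1$ nor is smooth at $x=0$, and it may even fail to be positive. Real cancellation needs many sign changes contributing comparably at each scale. You would then need an actual estimate that the alternating sum beats a single term by more than any power of $x$, uniformly for all small $x$. ``Tuning'' one $x$-independent sequence $\varepsilon_k$ to satisfy this continuum of conditions is the problem itself, not a step toward it. Your proposal gives no way to verify it.

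\textbf{The missing idea is to get the cancellation from analyticity of an oscillatory phase.} The paper takes $b(s)=\chi(s)e^{-1/s^2}\sin(s^{-4})$ and $\beta=b$.
\begin{itemize}
\item \emph{Upper bound on $B$.} With $u=1/s$, the main part of $B$ is the imaginary part of $\int_4^\infty u^{-2}e^{-u^2-1/(xu)+iu^4}\,du$. Rotating the contour to $\arg u=\pi/8$ turns the phase $e^{iu^4}$ into decay $e^{-r^4}$. This gives $|\partial_x^k B(x)|\lesssim_k e^{-cx^{-4/5}}$; the inner arc contributes only $O(e^{-d/x})$ and must be estimated with its derivatives.
\item \emph{Lower bound on $M$.} Since $b^2\ge 0$, there is no cancellation in $M(x)=\int e^{-s/x}b^2$. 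A Laplace-method lower bound near $u\approx x^{-1/3}$ gives $M(x)\gtrsim e^{-Cx^{-2/3}}$.
\item \emph{Derivatives of $1/M$.} Together with $|\partial_x^k M|\lesssim_k x^{1-k}$, this yields $|\partial_x^k(1/M)|\lesssim_k e^{A_k x^{-2/3}}$.
\end{itemize}
Since $4/5>2/3$, $\eta_0=-B/M$ is flat at $0$, and $Q=1+\eta_\delta(x)b(s)$ works. No $x$-dependent $\beta$ or interpolation between scales is needed.
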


\begin{remark}\label{Rmk::Intro::OtherWork}
    The key new proof given by ChatGPT gives
    Theorem \ref{Thm::Intro::PaleyWiener}\ref{Item::Intro::PaleyWiener::FailureSmooth}. 
    We could not find an equivalent result in the literature. The closest we found 
    was the work on non-injectivity of weighted Radon transforms
    especially Boman \cite{BomanLocalNonInjectivityForWeightedRadonTransforms},
    which was based on the earlier paper \cite{BomanExampleNonuniquenessForGeneralizedRadonTransform}. 
    These ideas were furthered in \cite{GoncharovNovikovNonUniquenessForRadonTransformsWithContinuousPositiveWeights,GoncharovNovikovBreakdownInjectivityForWeightedRayTransforms}.
    The proofs in these papers have some strong similarities to the one given here; though the results are not the same,
    and the difficulties somewhat different.
    We do not know whether ChatGPT took inspiration from these works, found the proof
    somewhere that we missed, or came up with it on its own.
\end{remark}

\paragraph{Acknowledgments:}
We thank Liam Price for prompting and providing access to ChatGPT-5.5-Pro. 
In particular, he devised the precise prompt given in Section \ref{Section::ChatGPT},
based on the problem we provided. We thank the MathBot Discord community for testing
the question on other frontier models.

\subsection{Inverse Problems}\label{Section::InverseProblems}
We were originally motivated to study equations like \eqref{Eqn::Intro::MainDiffEq}
by inverse problems--and in fact studied nonlinear versions in \cite{StreetDifferentialEquationsWithDifferenceQuotient}. 
Uniqueness for \eqref{Eqn::Intro::MainDiffEq} can be viewed
as a prototype for a certain kind of inverse problem. 

\paragraph{Forward Problem:} Suppose we are given \(M(t,x)\), \(f(1,x)\), \(f(t,0)\),
and know that \(f\) satisfies \eqref{Eqn::Intro::MainDiffEq}. The forward problem
is to compute \(f(0,x)\) from this data.  Indeed,
\begin{equation*}
    f(0,x)=\exp\left( -\int_0^1 M(s,x)/x\:ds \right) f(1,x)
    +\frac{1}{x}\int_0^1 \exp\left( -\int_0^s M(u,x)/x\:du \right) M(s,0)f(s,0)\: ds.
\end{equation*}
In particular, in the special case \(M(s,x)=1\), this gives
\begin{equation}\label{Eqn::Intro::Forward::Laplace}
    f(0,x)=e^{-1/x}f(1,x) + \frac{1}{x}\int_0^1 e^{-s/x} f(s,0)\: ds.
\end{equation}

\paragraph{Inverse Problem:} Suppose we are given \(M(t,x)\), \(f(0,x)\), and know that
\(f\) satisfies \eqref{Eqn::Intro::MainDiffEq}.  The inverse problem is to deduce 
\(f(t,x)\) from this data; equivalently to deduce \(f(t,0)\) from this data.\footnote{\(f(0,x)\)
and \(f(t,0)\) uniquely determine \(f(t,x)\) because \eqref{Eqn::Intro::MainDiffEq} is a standard
ODE away from \(x=0\) if \(f(t,0)\) is given.}
Consider the special case when \(M(s,x)=1\). 
Using \eqref{Eqn::Intro::Forward::Laplace}, the inverse problem is to then find \(f(s,0)\) knowing
\begin{equation}\label{Eqn::Intro::Inverse::LaplaceUniqueExample}
    \frac{1}{x}\int_0^1 e^{-s/x} f(s,0)\: ds = f(0,x)-e^{-1/x}f(1,x)=f(0,x)+O(e^{-1/x}).
\end{equation}
Proposition \ref{Prop::PWUnique::StreetUnique}
(or the easier result described in Remark \ref{Rmk::PWUnique::SimonResult}) shows that \(f(0,x)\) uniquely determines \(f(s,0)\).  Thus, the inverse problem
is a generalization of these Paley--Wiener Theorems for the Laplace transform.
Theorem \ref{Thm::Intro::MainUniqueness}\ref{Item::Intro::MainUniqueness::NonUnique}
shows that the general inverse problem (for general \(M(t,x)\)) does not always have uniqueness;
while Theorem \ref{Thm::Intro::MainUniqueness}\ref{Item::Intro::MainUniqueness::Unique}
shows it does have uniqueness for a large class of \(M(t,x)\).

In fact, motivated by the famous anisotropic Calder\'on problem, in \cite{StreetDifferentialEquationsWithDifferenceQuotient},
we studied differential equations of the form
\begin{equation}\label{Eqn::Intro::Inverse::NonlinearDeq}
    \frac{\partial}{\partial t} f(t,x) = \frac{F(f(t,x))-F(f(t,0))}{x},\quad x>0.
\end{equation}
We proved uniqueness (and stability and reconstruction) for such differential equations (under the assumption \(F'(f(t,0))>0\))
when \(F\) is a polynomial.  It seems likely the same methods would work when \(F\)
is real analytic; however, the case when \(F\) is smooth is perhaps most closely related
to the smooth, anisotropic Calder\'on problem. For a brief discussion on this connection,
see below.

To prove uniqueness of \eqref{Eqn::Intro::Inverse::NonlinearDeq}, a natural approach would
be as follows. Write \(F(y)-F(z)=G(y,z)(y-z)\).  Then, if \(f_1(t,x),f_2(t,x)\)
both satisfy \eqref{Eqn::Intro::Inverse::NonlinearDeq} with \(f_1(0,x)=f_2(0,x)\),
then \(g(t,x):=f_1(t,x)-f_2(t,x)\) satisfies
\begin{equation*}
    \frac{\partial}{\partial t} g(t,x)=\frac{G(f_1(t,x),f_2(t,x))g(t,x)-G(f_1(t,0),f_2(t,0))g(t,0)}{x},\quad x>0,
\end{equation*}
and \(g(0,x)=0\).  This is of the form \eqref{Eqn::Intro::MainDiffEq}. 
However, since Theorem \ref{Thm::Intro::MainUniqueness}\ref{Item::Intro::MainUniqueness::NonUnique}
shows such differential equations may have non-zero solutions, one cannot use this to conclude
\(f_1(t,x)=f_2(t,x)\).
We do not know whether the results of \cite{StreetDifferentialEquationsWithDifferenceQuotient}
can be generalized to address equations like \eqref{Eqn::Intro::Inverse::NonlinearDeq}
when \(F\) is not real analytic.  The examples given in this paper do not immediately translate to examples
for these nonlinear differential equations.

We were originally motivated to study differential equations of the form
\eqref{Eqn::Intro::Inverse::NonlinearDeq} by the famous anisotropic
Calder\'on problem \cite{CalderonOnAnInverseBoundaryValueProblem,LassasUhlmannOnDeterminingARiemannianManifoldFromTheDirichletToNeumannMap}: does the Dirichlet-to-Neumann map on a compact Riemannian manifold
with boundary, 
of dimension \(\geq 3\),
uniquely determine the Riemannian manifold structure (up to the natural boundary-fixing diffeomorphism obstruction)?
There has been a considerable amount of work on this problem, and we do not attempt any sort of history here;
see \cite{UhlmannElectricalImpedanceTomographyAndCalderonsProblem,UhlmannThirtyYearsOfCalderonsProblem}
for surveys.

Roughly speaking, the connection to the Calder\'on problem is the following.
Consider a smooth, Riemannian manifold with boundary, \(M\), and let \(t\) denote boundary
normal coordinates near the boundary. On a neighborhood of the boundary,
this decomposes the manifold into layers, one for each small \(t\geq 0\), which are
diffeomorphic to \(\partial M\), each of which inherit a metric \(g(t)\)
and Laplace-Beltrami operator \(\bigtriangleup_{g(t)}\).
Lee and Uhlmann \cite{LeeUhlmannDeterminingAnisotropicRealAnalyticConductivitiesByBoundaryMeasurements}
showed that
the Dirichlet-to-Neumann map is the initial condition for a differential equation 
similar to the following equation:
\begin{equation}\label{Eqn::Calderon::CalderonDiffEq}
    \frac{\partial}{\partial t} \Lambda(t) = \Lambda(t)^2-\bigtriangleup_{g(t)},
\end{equation}
where \(\Lambda(t)\) is a pseudodifferential operator of order \(1\) on \(\partial M\),
depending smoothly on \(t\)
(see \cite[(9.1)]{StreetDifferentialEquationsWithDifferenceQuotient} for the precise
differential equation which arises, which is a bit more complicated). If this differential equation had uniqueness, then the Dirichlet-to-Neumann
map would uniquely determine the metric near the boundary.
In fact, \cite{LeeUhlmannDeterminingAnisotropicRealAnalyticConductivitiesByBoundaryMeasurements}
use this idea in the real analytic setting, where the differential equations automatically have uniqueness.
Because the left-hand side of \eqref{Eqn::Calderon::CalderonDiffEq} is a pseudodifferential
operator of order \(1\), and the right-hand side is a pseudodifferential operator of order
\(2\), the principal symbols on the right-hand side must cancel. This shows
that the principal symbol of \(\bigtriangleup_{g(t)}\) is the square of the principal
symbol of \(\Lambda(t)\).  Thinking of the principal symbol as the limit of the symbol
as some parameter goes to \(0\), this sees \eqref{Eqn::Calderon::CalderonDiffEq}
as a non-commutative analog of \eqref{Eqn::Intro::Inverse::NonlinearDeq}. 
See \cite[Section 9]{StreetDifferentialEquationsWithDifferenceQuotient}
for a more detailed discussion and more references.
We do not know if the methods of this paper have any application to the Calder\'on problem.

\begin{remark}
    Weighted Laplace transforms with parameter-dependent weights also appear
in recent work of Feizmohammadi, Kian, and Uhlmann on partial data inverse
problems \cite{FeizmohammadiKianUhlmannPartialDataInverseProblemsForReactionDiffusionAndHeatEquations}. 
Their argument also uses the Phragmén--Lindel\"of principle and Gr\"onwall's
inequality (which both appear in the literature cited in Section \ref{Section::UniquePW}
and Gr\"onwall's inequality appears directly in Section \ref{Section::UniquePW}). 
This provides one concrete PDE inverse-problem setting in which
weighted Laplace transforms of the type considered here arise naturally.
The  counterexample in Theorem \ref{Thm::Intro::PaleyWiener}\ref{Item::Intro::PaleyWiener::FailureSmooth} concerns a different, much less structured,
smooth class of weights; and it therefore does not directly apply to such settings.
\end{remark}

\section{Paley--Wiener type theorems for the Laplace transform (Theorem \texorpdfstring{\ref{Thm::Intro::PaleyWiener}}{\ref*{Thm::Intro::PaleyWiener}})}

\subsection{Uniqueness (Proof of Theorem \texorpdfstring{\ref{Thm::Intro::PaleyWiener}\ref{Item::Intro::PaleyWiener::Unique}}{\ref*{Thm::Intro::PaleyWiener}\ref*{Item::Intro::PaleyWiener::Unique}})}
\label{Section::UniquePW}
We use the following Paley--Wiener type theorem for the Laplace transform. 
\begin{proposition}
    \label{Prop::PWUnique::StreetUnique}
    Suppose \(a(t)\in \CSpace[\lbrack 0, 1\rbrack]\) with
    \begin{equation}\label{Eqn::PWUnique::StreetUnique::AssumedBound}
        \left| n \int_0^1 e^{-nt}a(t)\: dt \right|=O(e^{-n}),\quad n\in \N.
    \end{equation}
    Then, \(a=0\).
\end{proposition}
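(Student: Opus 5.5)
Set \(F(z):=\int_0^1 e^{-zt}a(t)\,dt\), an entire function of \(z\in\mathbb{C}\). The plan is to apply Phragmén--Lindelöf to \(G(z):=e^{z}F(z)=\int_0^1 e^{z(1-t)}a(t)\,dt\). This is a Laplace-type transform of \(\tilde a(u):=a(1-u)\) on \([0,1]\). The hypothesis \eqref{Eqn::PWUnique::StreetUnique::AssumedBound} says exactly that \(|G(n)|\le C/n\) for \(n\in\N\), so \(G\) is bounded (indeed tends to zero) along the positive integers. Once \(G\) is shown to vanish identically, \(F\equiv 0\), and the Laplace transform of the compactly supported continuous \(a\) vanishes. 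Hence all its moments \(\int_0^1 t^k a(t)\,dt=(-1)^kF^{(k)}(0)\) vanish, and \(a=0\) by Weierstrass approximation.

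\textbf{Growth bounds.} Directly from the integral, \(|G(z)|\le \|a\|_{L^1}\max(1,e^{\Real z})\). So \(G\) is of exponential type at most \(1\) and bounded in the closed left half-plane. In the right half-plane we only know \(|G(z)|\le Ce^{\Real z}\), with \(G\) bounded on the imaginary axis. The key input is Carlson's theorem, in the following form. If \(H\) is holomorphic in \(\Real z\ge 0\), satisfies \(|H(z)|\le Ce^{\tau|z|}\) with \(\tau<\pi\), and \(H(n)=0\) for all \(n\in\N\), then \(H\equiv 0\). Our \(G\) only satisfies \(G(n)=O(1/n)\), not \(G(n)=0\), so I would instead use the Phragmén--Lindelöf/Carlson variant due to Cartwright. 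An entire function of exponential type \(<\pi\) that is bounded on the integers is bounded on the real axis. Since \(1<\pi\), this gives \(\sup_{x\in\R}|G(x)|<\infty\).

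\textbf{Phragmén--Lindelöf in the quadrants.} We now know \(G\) is bounded on the positive real axis and on the imaginary axis, and has exponential type in the first and fourth quadrants. Phragmén--Lindelöf on a quarter-plane (opening angle \(\pi/2\), allowing growth \(e^{|z|^{\rho}}\) for \(\rho<2\)) gives that \(G\) is bounded there. Together with the left half-plane bound, \(G\) is a bounded entire function, hence constant by Liouville. Since \(G(n)=O(1/n)\to 0\), the constant is \(0\), so \(G\equiv 0\) and the conclusion follows as described in the first paragraph.

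\textbf{Main obstacle.} The main obstacle is the Cartwright step: passing from boundedness on the integers to boundedness on \(\R_{>0}\) for a function of type \(1<\pi\). I would first try to quote the Cartwright theorem, which gives \(\int_{\R} \frac{\log^+|G(x)|}{1+x^2}\,dx\lesssim \sum_n \frac{\log^+|G(n)|}{1+n^2}\), as found in Boas's book on entire functions.

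A more self-contained alternative avoids Cartwright. Compare \(G\) with \(\sin(\pi z)\) and subtract an interpolant. Define
\[
H(z):=G(z)-\sum_n G(n)\frac{\sin\pi(z-n)}{\pi(z-n)}\,\phi_n(z),
\]
where the \(\phi_n\) are chosen so that the series converges. The summability \(G(n)=O(1/n)\) should make the series converge, giving an entire function of type \(\le\pi\) that is bounded on \(\R_{>0}\) and vanishes at the integers. Then Carlson's theorem applies to \(H\), since \(H\) has type \(<\pi\) in the imaginary direction. Controlling the type of this interpolant on the imaginary axis is the delicate point, so I would rely on the cited classical Cartwright theorem.
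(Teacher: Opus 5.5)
Your argument is correct, and it takes a genuinely different route from the paper. The paper gives no in-text proof at all. It imports the statement from Proposition~8.5 of Street's earlier paper (after multiplying through by $e^{-n}$). It only remarks (Remark~\ref{Rmk::PWUnique::SimonResult}) that the version with $n$ ranging over $[0,\infty)$ is the classical, easier Phragm\'en--Lindel\"of argument.

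You make the passage from integer to real sampling explicit. $G(z)=\int_0^1 e^{z(1-t)}a(t)\,dt$ has exponential type at most $1<\pi$ and is bounded on all of $\mathbb{Z}$. For $n\ge 1$ the hypothesis gives $|G(n)|\le C/n$, and for $n\le 0$ the left half-plane bound $|G|\le\|a\|_{L^1}$ applies. Say the second part explicitly, since Cartwright's theorem needs the bound at the negative integers too. Cartwright's theorem then gives boundedness on $\mathbb{R}$, and the quadrant Phragm\'en--Lindel\"of and Liouville steps finish exactly as in the continuous-parameter case.

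What your route buys is transparency and a little extra generality:
\begin{itemize}
\item it shows the discreteness of $n$ is harmless precisely because the type is below $\pi$;
\item it only uses boundedness of $G$ on $\mathbb{N}$, i.e.\ $\int_0^1 e^{-nt}a(t)\,dt=O(e^{-n})$ without the factor $n$, since the Liouville constant is $\lim_{x\to-\infty}G(x)=0$ by dominated convergence;
\item it works for $a\in L^1$.
\end{itemize}
What it costs is dependence on a nontrivial classical theorem, and it yields no quantitative stability, which the cited source may provide.

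Two details about the tools you cite. First, the form of Cartwright's theorem you actually use is the sup-norm one: if $f$ has exponential type $\tau<\pi$ and $|f(n)|\le M$ for all $n\in\mathbb{Z}$, then $|f(x)|\le C(\tau)M$ on $\mathbb{R}$. This is also the special case $\lambda_n=n$ of the Duffin--Schaeffer theorem. The logarithmic-integral form you mention under ``Main obstacle'' only places $G$ in the Cartwright class. Getting boundedness from it needs an extra indicator/Phragm\'en--Lindel\"of argument, so cite the sup-norm version.

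Second, the interpolation alternative would not work as sketched. The series $\sum_n G(n)\frac{\sin\pi(z-n)}{\pi(z-n)}\phi_n(z)$ in general has type $\pi$ along the imaginary axis, or more once convergence factors are inserted. That is exactly the borderline where Carlson's theorem fails (e.g.\ $\sin\pi z$), so you were right not to rely on it.
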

\begin{proof}
    This follows from \cite[Proposition 8.5]{StreetDifferentialEquationsWithDifferenceQuotient};
    by multiplying both sides of the equations in that proposition by \(e^{-n}\).
\end{proof}

\begin{remark}\label{Rmk::PWUnique::SimonResult}
    This kind of uniqueness statement is closely related to classical uniqueness theorems for the Laplace transform, going back at least to Lerch \cite{LerchSurUnPointDeLaTheorieDesFonctionsGeneratricesDAbel}; see also Widder's monograph \cite{WidderLaplaceTransform}. 
    We use the formulation from \cite[Proposition 8.5]{StreetDifferentialEquationsWithDifferenceQuotient}.
    The proof of Proposition \ref{Prop::PWUnique::StreetUnique}
    is  easier (and more well-known) if one assumes  \eqref{Eqn::PWUnique::StreetUnique::AssumedBound}
    for \(n\in [0,\infty)\) instead of \(n\in \N\).
    See \cite[Lemma A.2.1]{SimonNewApproachInverseSpectralTheory} for an early proof; and
    \cite[Section 8]{StreetDifferentialEquationsWithDifferenceQuotient} for an exposition.
\end{remark}

\begin{proof}[Proof of Theorem \ref{Thm::Intro::PaleyWiener}\ref{Item::Intro::PaleyWiener::Unique}]
    Without loss of generality, \(G=0\), as it can be incorporated into the error term \(O(e^{-1/x})\).
    Define \(b(t,s)=0=a(s)\) for \(s>1\) and
    write \(\lambda=1/x\).  We have, for \(\lambda\in \N\), taking \(u=s+t\) and changing the order of the integrals,
    \begin{equation*}
    \begin{split}
         &O(e^{-\lambda})
         =\lambda \int_0^1 e^{-\lambda t} \int_0^\infty e^{-\lambda s} b(t,s)\: ds\: a(t)\: dt
         =\lambda \int_0^{\infty} e^{-\lambda u}\int_0^u b(t,u-t)a(t)\: dt\: du
         \\&=\lambda \int_0^1 e^{-\lambda u} \int_0^u b(t,u-t)a(t)\: dt\: du + O(e^{-\lambda}). 
    \end{split}
    \end{equation*}
    Since \( \int_0^u b(t,u-t)a(t)\: dt\in \CSpace[\lbrack 0, 1\rbrack]\),
    Proposition \ref{Prop::PWUnique::StreetUnique} applies to show
    \(\int_0^u b(t,u-t)a(t)\: dt=0\).  Taking \(d/du\) of this, we see
    with \(b_s(t,s)=\partial_s b(t,s)\),
    \begin{equation*}
    \begin{split}
         &0=b(u,0)a(u)+\int_0^{u} b_s(t, u-t) a(t)\: dt,\quad \text{a.e.},
    \end{split}
    \end{equation*}
    and therefore,
    \begin{equation*}
        a(u) = -\frac{1}{b(u,0)} \int_0^u b_s(t,u-t)a(t)\: dt,\quad \text{a.e.}
    \end{equation*}
    We conclude, \(|a(u)|\lesssim \int_0^u |a(t)|\: dt\), almost everywhere. 
    Gr\"onwall's inequality implies \(a=0\).
\end{proof}


\subsection{Non-uniqueness for smooth functions (Proof of Theorem \texorpdfstring{\ref{Thm::Intro::PaleyWiener}\ref{Item::Intro::PaleyWiener::FailureSmooth}}{\ref*{Thm::Intro::PaleyWiener}\ref*{Item::Intro::PaleyWiener::FailureSmooth}})}\label{Section::PW::NonUnique}
In this section, we present the proof of Theorem \ref{Thm::Intro::PaleyWiener}\ref{Item::Intro::PaleyWiener::FailureSmooth};
adapted from the ChatGPT proof given in Section \ref{Section::ChatGPT}. 
We give a detailed proof; both to demonstrate the proof is sound, and to later
contrast the detailed proof with the proof given by ChatGPT in Section \ref{Section::ChatGPT}.

Take \(\chi\in \CinftycptSpace[\lbrack 0, 1/3)]\)
with \(\chi(s)=1\) for \(s\in [0,1/4]\). For \(s>0\), define
\(b(s):=\chi(s) e^{-1/s^2} \sin(s^{-4})\), and set \(b(0)=0\);
note that \(b\in \CinftycptSpace[\lbrack 0,1/3)]\).
Set
\begin{equation*}
    B(x):=\int_0^1 e^{-s/x}b(s)\: ds, \quad M(x):=\int_0^1 e^{-s/x}b(s)^2\: ds, \quad R(x):=\frac{1}{M(x)},
\end{equation*}
\begin{equation*}
    \eta_0(x):=-\frac{B(x)}{M(x)}=-B(x)R(x),\quad x>0,\quad x\text{ small},
\end{equation*}
and set \(\eta_0(0)=0\).
We will show \(\eta_0\) defines a smooth function on \([0,\delta_0]\) for some \(\delta_0>0\),
and vanishes to infinite order as \(x\downarrow 0\).

We use \(A_1\lesssim A_2\) to mean \(A_1\leq CA_2\) for some constant \(C\). 
    We write \(A_1\lesssim_k A_2\) to mean \(A_1\leq C_k A_2\) for some constant \(C\)
    depending on \(k\).
    We write \(A_1\approx A_2\) for \(A_1\lesssim A_2\) and \(A_2\lesssim A_1\).
    
\begin{lemma}\label{Lemma::PfSmooth::EstB}
    There exists \(c>0\) such that for every \(k\),
    \begin{equation*}
        \left| \partial_x^k B(x) \right|\lesssim_k \exp\left( -c x^{-4/5} \right).
    \end{equation*}
\end{lemma}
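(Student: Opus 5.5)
The plan is to deform the contour in the complex $s$-plane on the part of the integral where $\chi\equiv 1$, so that the oscillatory factor $\sin(s^{-4})$ becomes exponentially decaying. The exponent $4/5$ then comes from balancing $e^{-|s|/x}$ against $e^{-c|s|^{-4}}$.

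\emph{Reduction to the case $k=0$ with polynomial weights.} Differentiating under the integral sign, $\partial_x^k e^{-s/x}$ is a finite linear combination of terms $s^j x^{-j-m}e^{-s/x}$ with $0\le j,m\le k$. So $\partial_x^k B(x)$ is a finite sum of terms $x^{-j-m}\int_0^1 e^{-s/x}s^j b(s)\,ds$. Since $x^{-N}\exp(-c x^{-4/5})\lesssim_N \exp(-\tfrac{c}{2}x^{-4/5})$, it suffices to prove
\[
\Bigl|\int_0^1 e^{-s/x}s^j b(s)\,ds\Bigr|\lesssim_j \exp(-c x^{-4/5})
\]
for each fixed $j$, with $c$ independent of $j$; afterwards $c$ is renamed $c/2$. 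Split this integral at $s=1/4$. On $[1/4,1/3]$ the integrand is bounded by $e^{-1/(4x)}$, which is far smaller than $\exp(-cx^{-4/5})$ for small $x$. For $x$ bounded away from $0$ all the quantities are trivially bounded.

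\emph{Contour deformation on $[0,1/4]$.} On $[0,1/4]$ we have $\chi\equiv1$. Write $\sin(s^{-4})=\frac{1}{2i}\bigl(e^{is^{-4}}-e^{-is^{-4}}\bigr)$. Then
\[
g_\pm(s)=s^j\exp\bigl(-s/x-s^{-2}\pm i s^{-4}\bigr)
\]
is holomorphic on $\mathbb{C}\setminus\{0\}$. Fix $\varphi=\pi/16$. For $g_+$, use Cauchy's theorem to replace $[0,1/4]$ by the ray from $0$ to $\tfrac14 e^{-i\varphi}$, followed by the segment from $\tfrac14e^{-i\varphi}$ to $\tfrac14$. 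For $g_-$, use the mirror image with angle $+\varphi$. Justifying Cauchy's theorem at the vertex $0$ requires checking that $g_\pm\to0$ uniformly as $s\to0$ in the closed sector between the real axis and the ray. This holds because there $\operatorname{Re}(s^{-2})\ge |s|^{-2}\cos(2\varphi)>0$ and $\operatorname{Re}(\pm i s^{-4})\le 0$.

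\emph{Estimating the two pieces.} On the ray $s=re^{\mp i\varphi}$ the real part of the exponent is
\[
-\frac{r\cos\varphi}{x}-\frac{\cos 2\varphi}{r^{2}}-\frac{\sin(\pi/4)}{r^{4}}\le -c_1\Bigl(\frac{r}{x}+\frac{1}{r^4}\Bigr).
\]
Minimizing over $r>0$, the minimum occurs at $r\approx x^{1/5}$ and gives the value $\approx x^{-4/5}$. Hence on the ray the integrand is at most $\exp(-c x^{-4/5})$, uniformly over a path of bounded length. On the short segment we have $|s|\approx 1/4$ and $\operatorname{Re}s\ge \tfrac14\cos\varphi$. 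Here $|e^{\pm is^{-4}}|\le e^{4^4}$ and $|e^{-s^{-2}}|\lesssim1$, so the contribution is $\lesssim e^{-c/x}$. Combining these bounds with the reduction above proves the lemma.

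\emph{Main obstacle.} The step needing the most care is the sign bookkeeping in the deformation. One must choose the rotation direction separately for $e^{+is^{-4}}$ and $e^{-is^{-4}}$ so that the oscillation turns into decay, while $e^{-s^{-2}}$ and $e^{-s/x}$ stay decaying ($|\varphi|<\pi/4$). One must also verify that the contour can legitimately be pinned at the essential singularity $s=0$. The choice of a small fixed angle such as $\varphi=\pi/16$ handles all of these requirements at once.
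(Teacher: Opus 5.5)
Your proposal is correct and follows essentially the paper's route. You rotate the contour by a fixed angle so that $\sin(s^{-4})$ turns into decay, and use the balance $r/x+r^{-4}\gtrsim x^{-4/5}$ on the rotated ray, which is the paper's minimization of $\tfrac12 r^4+\sigma/(xr)$ after $u=1/s$; you also get $O(e^{-c/x})$ bounds for the connecting piece near $|s|=1/4$ and for $s\ge 1/4$. The differences are organizational: you work in $s$ rather than $u=1/s$, split $\sin$ into $e^{\pm i s^{-4}}$ instead of taking an imaginary part, and differentiate before deforming (reducing to weights $s^j$) instead of differentiating the deformed integrals $I$ and $J$. Your bound $|e^{\pm i s^{-4}}|\le e^{4^4}$ on the straight chord is a harmless slip, since $|s|$ dips below $1/4$ there, but the chord lies in your sector, where $|e^{\pm i s^{-4}}|\le 1$.
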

\begin{proof}
    We have \(B(x)=B_0(x)+B_1(x)\), where
    \begin{equation*}
      B_0(x)=\int_0^{1/4} e^{-s/x} e^{-1/s^2}\sin(s^{-4})\: ds,  
    \end{equation*}
    \begin{equation*}
        B_1(x)=\int_{1/4}^1 \chi(s)e^{-s/x} e^{-1/s^2}\sin(s^{-4})\: ds.
    \end{equation*}
    Because the integrand of \(B_1\) starts at \(1/4\), we see
    \begin{equation*}
        \left| \partial_x^k B_1(x) \right| \lesssim_k x^{-2k}e^{-1/4x} \lesssim_k e^{-1/8x}, 
    \end{equation*}
    which is better than our desired estimate for \(B\). Thus, it suffices to estimate \(B_0\). 

    Using the change of variables \(u=1/s\), we have
    \begin{equation*}
        B_0(x)=\int_4^\infty u^{-2} e^{-u^2-1/xu} sin(u^4)\: du.
    \end{equation*}
    Set \(F_x(z):=z^{-2} \exp\left( -z^2-1/(xz) +iz^4 \right)\). Then,
    \begin{equation*}
        B_0(x) = \Imaginary \int_4^\infty F_x(r)\: dr.
    \end{equation*}
    Thus, it suffices to show that there exists \(c>0\) with
    \begin{equation}\label{Eqn::ProofSmooth::EstB::STSFx}
        \left| \partial_x^k  \int_4^\infty F_x(r)\: dr \right|\lesssim_k \exp\left( -c x^{-4/5} \right).
    \end{equation}

    To estimate \(\int_4^\infty F_x(r)\: dr\), we apply a change of contour as described in
    Figure \ref{Figure::Countour}. Indeed, by Cauchy's Theorem, we have
    for \(L\) large,
    \begin{equation}\label{Eqn::ProofSmooth::EstB::ChangeContour}
        \int_4^L F_x(r)\: dr = e^{i\pi/8}\int_4^L F_x(r e^{i\pi/8})\: dr
        -\int_0^{\pi/8} F_x(L e^{i\theta}) iL e^{i\theta}\: d\theta 
        +\int_0^{\pi/8} F_x(4e^{i\theta}) i 4e^{i\theta}\: d\theta,
    \end{equation}
    where the final term comes from  $\Gamma_{\mathrm{in}}$ in Figure \ref{Figure::Countour},
    and the second to last term comes from $\Gamma_{\mathrm{out}}$.

\begin{figure}[htbp]
\centering
\begin{tikzpicture}[scale=1.2, >=stealth]

  \def\r{2.0}        
  \def\R{4.5}        
  \def\ang{22.5}     

  \draw[->] (-0.4,0) -- (\R+1.0,0) node[right] {$\Re z$};
  \draw[->] (0,-0.4) -- (0,2.4) node[above] {$\Im z$};

  \draw[thick] (\r,0) -- (\R,0);
  \draw[thick] (\R,0) arc[start angle=0,end angle=\ang,radius=\R];
  \draw[thick] ({\R*cos(\ang)},{\R*sin(\ang)}) -- ({\r*cos(\ang)},{\r*sin(\ang)});
  \draw[thick] ({\r*cos(\ang)},{\r*sin(\ang)}) arc[start angle=\ang,end angle=0,radius=\r];

    \draw[->, thick] ({0.55*\r+0.45*\R},0) -- ({0.45*\r+0.55*\R},0);

  \draw[->, thick]
    ({\R*cos(0.35*\ang)},{\R*sin(0.35*\ang)})
    arc[start angle={0.35*\ang}, end angle={0.58*\ang}, radius=\R];

  \draw[->, thick]
    ({(0.72*\R+0.28*\r)*cos(\ang)},{(0.72*\R+0.28*\r)*sin(\ang)})
    -- ({(0.52*\R+0.48*\r)*cos(\ang)},{(0.52*\R+0.48*\r)*sin(\ang)});

  \draw[->, thick]
    ({\r*cos(0.70*\ang)},{\r*sin(0.70*\ang)})
    arc[start angle={0.70*\ang}, end angle={0.42*\ang}, radius=\r];

  \fill (\r,0) circle (1.5pt);
  \fill (\R,0) circle (1.5pt);
  \fill ({\r*cos(\ang)},{\r*sin(\ang)}) circle (1.5pt);
  \fill ({\R*cos(\ang)},{\R*sin(\ang)}) circle (1.5pt);

  \node[below] at (\r,0) {$4$};
  \node[below] at (\R,0) {$L$};
  \node[left] at ({\r*cos(\ang)},{\r*sin(\ang)}) {$4e^{i\pi/8}$};
  \node[above right] at ({\R*cos(\ang)},{\R*sin(\ang)}) {$Le^{i\pi/8}$};

  \node[right] at ({\R*cos(0.60*\ang)},{\R*sin(0.60*\ang)}) {$\Gamma_{\mathrm{out}}$};
  \node[left] at ({\r*cos(0.52*\ang)},{\r*sin(0.52*\ang)}) {$\Gamma_{\mathrm{in}}$};

  \draw (1.0,0) arc[start angle=0,end angle=\ang,radius=1.0];
  \node at ({1.25*cos(0.5*\ang)},{1.25*sin(0.5*\ang)}) {$\frac{\pi}{8}$};

\end{tikzpicture}
\caption{The positively oriented contour in the sector $0 \le \arg z \le \pi/8$ with inner radius $4$ and outer radius $L$.}
\label{Figure::Countour}
\end{figure}

    In the remainder of the proof, we use repeatedly \(|e^z|=e^{\Real z}\).
    We have
    \begin{equation}\label{Eqn::ProofSmooth::EstimateB::Tmp1}
    \begin{split}
         &\left| \int_0^{\pi/8} F_x(L e^{i\theta}) iL e^{i\theta}\: d\theta\right|
         =\left| L^{-1} \int_0^{\pi/8} e^{-i\theta} \exp\left(  -L^2 e^{2i\theta} -\frac{1}{xL} e^{-i\theta} +iL^4 e^{4i\theta} \right)\:d\theta \right|
         \\&\leq L^{-1}\int_0^{\pi/8} \exp\left( -L^{2}\cos(2\theta) -\frac{\cos(\theta)}{xL}-L^4\sin(4\theta) \right)\:d\theta
        \\&\leq L^{-1} \int_0^{\pi/8} \exp\left( -L^2/\sqrt{2} \right)\: d\theta \xrightarrow{L\rightarrow \infty}0.
    \end{split}
    \end{equation}
    Taking \(L\rightarrow \infty\) in \eqref{Eqn::ProofSmooth::EstB::ChangeContour}
    and using \eqref{Eqn::ProofSmooth::EstimateB::Tmp1}, we
    see
    \begin{equation*}
        \int_4^\infty F_x(r)\: dr =e^{i\pi/8} \int_4^\infty F_x(re^{i\pi/8})\: dr +\int_0^{\pi/8} F_x(4e^{i\theta}) i4e^{i\theta}\: d\theta
        =:I(x)+J(x).
    \end{equation*}
    Thus, to prove \eqref{Eqn::ProofSmooth::EstB::STSFx} it suffices to show
    the next two estimates:
    \begin{equation}\label{Eqn::ProofSmooth::EstB::STSI}
        \left| \partial_x^k I(x) \right|\lesssim_k \exp\left( -cx^{-4/5} \right),
    \end{equation}
    \begin{equation}\label{Eqn::ProofSmooth::EstB::STSJ}
        \left| \partial_x^k J(x) \right|\lesssim_k \exp\left( -d/x \right),
    \end{equation}
    for some \(c,d>0\).
    
    We turn to establishing \eqref{Eqn::ProofSmooth::EstB::STSJ}. 
    We have,
    \begin{equation*}
        J(x)=\frac{i}{4}\int_0^{\pi/8} e^{-i\theta}
        \exp\left( -16 e^{2i\theta}-\frac{1}{4x} e^{-i\theta} +i 256 e^{4i\theta} \right)\: d\theta. 
    \end{equation*}
    Taking \(k\) derivatives in \(x\) of \(J(x)\) adds powers of \(x^{-1}\) of at most \(x^{-2k}\)
    and adds terms which are polynomial in \(e^{-i\theta}\). Thus, we have
    \begin{equation*}
    \begin{split}
         &\left| \partial_x^k J(x) \right|
         \lesssim_k x^{-2k} \int_0^{\pi/8} \left| 
            \exp\left( -16 e^{2i\theta} -\frac{1}{4x}e^{-i\theta}+i256 e^{4i\theta} \right)
         \right|\: d\theta
         \\&= x^{-2k}
         \int_0^{\pi/8} \exp\left( -16 \cos(2\theta)-\frac{1}{4x}\cos(\theta)-256 \sin(4\theta) \right)\: d\theta
        \\&\lesssim x^{-2k} e^{-\cos(\pi/8)/4x}
        \lesssim_k e^{-\cos(\pi/8)/8x},
        \end{split}
    \end{equation*}
    establishing \eqref{Eqn::ProofSmooth::EstB::STSJ}. 

    Finally, we turn to \eqref{Eqn::ProofSmooth::EstB::STSI}.
    Set \(\sigma:=\cos(\pi/8)>0\).
    We claim
    \begin{equation}\label{Eqn::PWSmooth::EstB::Tmp3}
        \int_4^\infty \exp\left( -r^4-\frac{\sigma}{xr} \right)\: dr \lesssim \exp\left( -c_2 x^{-4/5} \right),
    \end{equation} 
    for some \(c_2>0\) and for \(x>0\) sufficiently small.
    Set \(\psi_x(r):=\frac{1}{2}r^4+\frac{\sigma}{xr}\).
    By setting \(\psi_x'(r)=0\), we see \(\psi_x\) is minimized
    at \(r_0=\left( \sigma/2 \right)^{1/5} x^{-1/5}\).
    At this minimum, \(\psi_x(r_0)\geq \left( \sigma/2 \right)^{4/5} x^{-4/5}\);
    and therefore \(\psi_x(r)\geq \left( \sigma/2 \right)^{4/5} x^{-4/5}\), \(\forall r\).
    Using this, we have,
    \begin{equation*}
    \begin{split}
         &\int_4^\infty \exp\left( -r^4-\frac{\sigma}{xr} \right)\: dr
         =\int_4^\infty e^{-r^4/2} \exp\left( -\psi_x(r) \right)\: dr
         \\&\leq \exp\left(- \left( \sigma/2 \right)^{4/5} x^{-4/5} \right)\int_4^\infty e^{-r^4/2}\: dr
         \lesssim  \exp\left( -\left( \sigma/2 \right)^{4/5} x^{-4/5} \right),
    \end{split}
    \end{equation*}
    establishing \eqref{Eqn::PWSmooth::EstB::Tmp3}.

    Consider,
    \begin{equation*}
        I(x)=e^{i\pi/8} \int_4^{\infty} r^{-2} e^{-i\pi/4} 
        \exp\left( -r^2 e^{i\pi/4} -\frac{e^{-i\pi/8}}{xr}-r^4  \right)\: dr.
    \end{equation*}
    A derivative in \(x\) of \(I(x)\) brings down a polynomial of degree at most \(2k\) in \(x^{-1}\)
    and a polynomial in \(r^{-1}\). The polynomials in \(r^{-1}\) are bounded on the domain of integration,
    so we have, using \eqref{Eqn::PWSmooth::EstB::Tmp3},
    \begin{equation*}
    \begin{split}
         &\left| \partial_x^k I(x) \right|
         \lesssim_k x^{-2k}\int_4^{\infty} \left|\exp\left( -r^2 e^{i\pi/4} -\frac{e^{-i\pi/8}}{xr}-r^4  \right)  \right|\: dr
         \\&=x^{-2k}\int_4^{\infty}\exp\left( -r^{2} \cos(\pi/4) -\frac{\cos(\pi/8)}{xr} -r^4 \right)\: dr
         \leq x^{-2k}\int_4^{\infty}\exp\left(-\frac{\sigma}{xr} -r^4 \right)\: dr
         \\&\lesssim x^{-2k} \exp\left( -c_2 x^{-4/5} \right)
         \lesssim_k \exp\left( -c_2 x^{-4/5}/2 \right),
    \end{split}
    \end{equation*}
    establishing \eqref{Eqn::ProofSmooth::EstB::STSI} and completing the proof.
\end{proof}

\begin{lemma}\label{Lemma::ProofSmooth::LowerBoundM}
    \(\exists C\geq 1\) with \(M(x)\gtrsim e^{-C x^{-2/3}}\) for \(x>0\) small.
\end{lemma}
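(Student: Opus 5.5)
Since $b(s)^2\geq 0$ and $\chi=1$ on $[0,1/4]$, I will bound $M(x)$ from below by restricting the integral to a small window of $s$ where $\sin^2(s^{-4})$ is bounded below. Concretely,
\begin{equation*}
    M(x)\geq \int_0^{1/4} e^{-s/x} e^{-2/s^2}\sin^2(s^{-4})\: ds .
\end{equation*}
The integrand $e^{-s/x}e^{-2/s^2}$ balances the two exponentials when $s/x\approx 2/s^2$, i.e.\ at $s\approx x^{1/3}$. There $s/x\approx x^{-2/3}$ and $1/s^2\approx x^{-2/3}$, which explains the exponent $x^{-2/3}$ in the lemma.

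First, I choose the window. For $x$ small, set $I_x:=[x^{1/3},2x^{1/3}]\subset[0,1/4]$. On $I_x$,
\begin{equation*}
    e^{-s/x}e^{-2/s^2}\geq \exp\left(-2x^{-2/3}-2x^{-2/3}\right)=e^{-4x^{-2/3}}.
\end{equation*}
So it suffices to show
\begin{equation*}
    \int_{I_x}\sin^2(s^{-4})\: ds\gtrsim x^{1/3},
\end{equation*}
or even merely $\gtrsim$ some power of $x$, since any polynomial loss can be absorbed by enlarging $C$ (e.g.\ $x^{N}e^{-4x^{-2/3}}\gtrsim e^{-5x^{-2/3}}$).

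Second, I handle the oscillation, which is the main (mild) obstacle. I change variables $u=s^{-4}$, so $ds=-\tfrac14 u^{-5/4}du$. The window becomes $u\in[2^{-4}x^{-4/3},x^{-4/3}]$, an interval of length $\approx x^{-4/3}$ containing $\gtrsim x^{-4/3}$ full periods of $\sin^2$. On this interval $u^{-5/4}\approx x^{5/3}$, so
\begin{equation*}
    \int_{I_x}\sin^2(s^{-4})\,ds\approx x^{5/3}\int \sin^2(u)\,du\approx x^{5/3}\cdot x^{-4/3}=x^{1/3}.
\end{equation*}
Alternatively, and avoiding the change of variables: the points where $|\sin(s^{-4})|\geq 1/2$ form a union of intervals. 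Each of these comprises a fixed fraction of each local period, and the local period is $\approx s^5$ in $s$, which is much smaller than $x^{1/3}$. Hence a proportion bounded below of $I_x$ has $\sin^2\geq 1/4$.

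Combining the two steps gives
\begin{equation*}
    M(x)\gtrsim x^{1/3}e^{-4x^{-2/3}}\gtrsim e^{-5x^{-2/3}}
\end{equation*}
for small $x$, which proves the lemma with $C=5$. The only care needed is the averaging of $\sin^2$ against the slowly varying weight $u^{-5/4}$. This is elementary, because the weight varies by at most a factor of $2^5$ over the range, so bounding it below by its minimum suffices.
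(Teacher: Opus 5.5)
Your proof is correct. It works at the same scale as the paper's, $s\approx x^{1/3}$ (equivalently $u=1/s\approx x^{-1/3}$), but it handles the factor $\sin^2(s^{-4})$ differently.

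\textbf{The paper's route.} The paper substitutes $u=1/s$ and picks a single point $u_1$ within $O(x)$ of the minimizer $u_0=(4x)^{-1/3}$ of $2u^2+1/(xu)$ with $\sin(u_1^4)=1$. It then integrates only over a window of length $cx$ around $u_1$, on which $\sin^2(u^4)\geq 1/2$. This gives $M(x)\gtrsim x^{5/3}e^{-Cx^{-2/3}}$.

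\textbf{Your route.} You take the whole window $[x^{1/3},2x^{1/3}]$ and bound the non-oscillatory factor by its worst value $e^{-4x^{-2/3}}$. You then average $\sin^2$ over the roughly $x^{-4/3}$ periods after substituting $u=s^{-4}$. This gives $M(x)\gtrsim x^{1/3}e^{-4x^{-2/3}}$.

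\textbf{Comparison.}
\begin{itemize}
\item Your averaging step can be made completely explicit through $\int_a^b\sin^2u\,du\geq \frac{b-a}{2}-\frac12$, and it yields the concrete constant $C=5$.
\item Your argument avoids the paper's local step near a peak. That step asserts that $\sin^2(u^4)\geq 1/2$ on $(u_1-cx,u_1+cx)$, which needs first-order control of $u^4$ near $u_0$ and is stated rather briskly in the paper.
\item In exchange, the paper's localization at the exact minimizer could give a nearly sharp constant in the exponent, roughly $3\cdot 2^{-1/3}\approx 2.38$ instead of your $4$.
\item That refinement is irrelevant here: Lemma~\ref{Lemma::PfSmooth::Esteta0} only uses that the power $2/3$ is less than $4/5$.
\item Both approaches lose only polynomial factors in $x$, which are absorbed by enlarging $C$.
\end{itemize}
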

\begin{proof}
    We have, letting \(u=1/s\),
    \begin{equation*}
    \begin{split}
         &M(x)=\int_0^1 e^{-s/x}b(s)^2\: ds\geq \int_0^{1/4} \exp\left( -s/x-2s^{-2} \right)\sin^2(s^{-4})\: ds
         =\int_4^{\infty} u^{-2} \exp\left( -2u^2-1/xu \right) \sin^2(u^4)\: du.
    \end{split}
    \end{equation*}
    We consider only \(x\leq 1/10000\). 
    Letting \(u_0=\left( 4x \right)^{-1/3}\), we may find \(u_1\)
    such that \(|u_0-u_1|\lesssim x\) and \(\sin(u_1^4)=1\). 
    Indeed, if \(u_1=u_0+a\), then
    \begin{equation*}
        u_1^4-u_0^4=4u_0^3 a+6u_0^2 a^2+4 u_0 a^3+a^4.
    \end{equation*}
    Thus, letting \(a\) range over an interval of size \(O(x)\) around \(0\),
    lets \(u_1^4\) range over an interval of size \(2\pi\) around \(u_0^4\). 

    For the same reason, for \(c>0\) small, for \(u\in (u_1-cx,u_1+c_x)\),
    we have \(\sin^2(u^4)\geq 1/2\). Such \(u\) satisfy \(u\approx x^{-1/3}\),
    and therefore,
    \begin{equation*}
        2 u^2 +\frac{1}{xu}\lesssim x^{-2/3}.
    \end{equation*} 
    We conclude, for some \(C\geq 1\),
    \begin{equation*}
    \begin{split}
         &M(x)\geq \int_4^\infty u^{-2} \exp\left( -2u^2-\frac{1}{xu} \right)\sin^2(u^4)\: du
         \gtrsim \int_{u_1-cx}^{u_1+cx} x^{2/3} \exp\left( -C x^{-2/3} \right)\: du
         \\&\gtrsim x^{5/3} \exp\left( -C x^{-2/3} \right)
         \gtrsim \exp\left( -2C x^{-2/3} \right),
   \end{split}
    \end{equation*}
    completing the proof with \(C\) replaced by \(2C\).
\end{proof}

\begin{lemma}\label{Lemma::ProofSmooth::UpperBoundM}
    For every \(k\),
    \begin{equation*}
        \left| \partial_x^k M(x) \right|\lesssim_k x^{-k+1}.
    \end{equation*}
\end{lemma}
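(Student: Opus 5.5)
Differentiating under the integral sign is permitted, since \(b^2\) is bounded and compactly supported and \(x>0\). For \(k\geq 1\) this gives
\begin{equation*}
    \partial_x^k M(x)=\int_0^1 \partial_x^k\left( e^{-s/x} \right) b(s)^2\: ds .
\end{equation*}
For \(k=0\) the bound reads \(|M(x)|\lesssim x\). This follows directly from \(0\leq b^2\lesssim 1\) and \(\int_0^1 e^{-s/x}\,ds\leq x\).

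For \(k\geq 1\), induction on \(k\) shows that \(\partial_x^k e^{-s/x}\) is a finite sum of terms
\begin{equation*}
    c_{k,j}\, s^j x^{-k-j} e^{-s/x},\qquad 1\leq j\leq k .
\end{equation*}
Each \(x\)-derivative either hits a power of \(x\), lowering the exponent by one, or hits the exponential, which brings down a factor \(s x^{-2}\). Both operations preserve the form \(s^j x^{-k-j}\), with \(k\) increased by one.

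We then bound \(b(s)^2\leq \|b\|_\infty^2\) and substitute \(s=xv\):
\begin{equation*}
    \int_0^1 s^j x^{-k-j} e^{-s/x}\: ds \leq x^{-k-j}\, x^{j+1}\int_0^\infty v^j e^{-v}\: dv = j!\, x^{-k+1}.
\end{equation*}
Summing the finitely many terms gives \(|\partial_x^k M(x)|\lesssim_k x^{-k+1}\).

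There is no real obstacle here. The only point needing care is the bookkeeping of the structure of \(\partial_x^k e^{-s/x}\): one must track that each power \(s^j\) is always paired with \(x^{-k-j}\), so that the scaling \(s=xv\) recovers exactly the gain \(x^{j+1}\). No cancellation or oscillation of \(b\) is used; the crude bound suffices. This is in contrast with Lemma~\ref{Lemma::PfSmooth::EstB}, which did require the contour argument.
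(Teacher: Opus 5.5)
Your proof is correct and follows the paper's argument. Your terms \(c_{k,j}s^jx^{-k-j}e^{-s/x}\) are exactly the paper's representation \(\partial_x^k e^{-s/x}=x^{-k}P_k(s/x)e^{-s/x}\), and bounding \(b^2\) crudely and substituting \(s=xv\) yields \(x^{-k+1}\int_0^\infty |P_k(v)|e^{-v}\,dv\) in both versions (the paper covers \(k=0\) in the same way, with \(P_0\equiv 1\)).
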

\begin{proof}
    Note that
    \begin{equation*}
        \partial_x^k M(x) = x^{-k}\int_0^1 P_k(s/x)e^{-s/x}b(s)^2\: ds,
    \end{equation*}
    for some polynomial \(P_k(u)\).
    Thus,
    \begin{equation*}
    \begin{split}
         &
         \left| \partial_x^k M(x) \right|
         \lesssim x^{-k} \int_0^1 \left| P_k(s/x) \right| e^{-s/x}\: ds
         \leq x^{-k+1}\int_0^\infty \left| P_k(u) \right|e^{-u}\: du
         \lesssim_k x^{-k+1}.
    \end{split}
    \end{equation*}
\end{proof}

\begin{lemma}\label{Lemma::PfSmooth::BoundR}
    For \(x\) small, we have for every \(k\), \(\exists A_k\geq 0\),
    \begin{equation*}
        \left| \partial_x^k R(x) \right|\lesssim_k e^{A_k x^{-2/3}}.
    \end{equation*}
\end{lemma}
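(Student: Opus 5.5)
We will differentiate \(R=1/M\) directly and combine the two previous lemmas. Write \(R\) as the composition of \(y\mapsto 1/y\) with \(M\), and apply the Faà di Bruno formula. Since \(\frac{d^j}{dy^j}(1/y)=(-1)^j j!\,y^{-j-1}\), we get
\begin{equation*}
    \partial_x^k R(x)=\sum_{j=1}^{k} c_{k,j}\, M(x)^{-j-1}\sum_{\substack{m_1+\cdots+m_j=k\\ m_i\geq 1}} \prod_{i=1}^{j} \partial_x^{m_i}M(x),
\end{equation*}
with combinatorial constants \(c_{k,j}\), valid for \(x>0\) small. The case \(k=0\) is just \(R=1/M\). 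Equivalently, one could argue by induction on \(k\) from \(MR=1\), differentiated \(k\) times via Leibniz, which expresses \(\partial_x^kR\) as \(-R\sum_{l<k}\binom{k}{l}\partial_x^{k-l}M\,\partial_x^l R\).

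Next we bound each factor. Lemma \ref{Lemma::ProofSmooth::UpperBoundM} gives \(|\partial_x^{m}M(x)|\lesssim_m x^{-m+1}\leq x^{-m}\) for \(x\leq 1\). Hence each product above is \(\lesssim_k x^{-k}\). Lemma \ref{Lemma::ProofSmooth::LowerBoundM} gives \(M(x)^{-j-1}\lesssim e^{(j+1)Cx^{-2/3}}\leq e^{(k+1)Cx^{-2/3}}\). Together,
\begin{equation*}
    |\partial_x^kR(x)|\lesssim_k x^{-k}e^{(k+1)Cx^{-2/3}}.
\end{equation*}

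Finally, we absorb the polynomial factor. Since \(x^{-k}\lesssim_k e^{x^{-2/3}}\) as \(x\downarrow 0\), we may take \(A_k=(k+1)C+1\).

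There is no real obstacle here. The only point needing care is that the exponent may grow with \(k\), which the statement permits through \(A_k\); uniform control is not needed because the decay of \(B\) in Lemma \ref{Lemma::PfSmooth::EstB}, namely \(e^{-cx^{-4/5}}\), beats any \(e^{A x^{-2/3}}\).
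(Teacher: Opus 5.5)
Your argument is correct and is essentially the paper's: the paper inducts on $k$ via the Leibniz identity $\partial_x^k R=-\frac{1}{M}\sum_{j=1}^k\binom{k}{j}(\partial_x^jM)(\partial_x^{k-j}R)$, using Lemmas \ref{Lemma::ProofSmooth::LowerBoundM} and \ref{Lemma::ProofSmooth::UpperBoundM}, which is just the unrolled form of your Fa\`a di Bruno expansion (yours gives the explicit choice $A_k=(k+1)C+1$). One cosmetic correction: the Fa\`a di Bruno coefficients depend on the composition $(m_1,\dots,m_j)$ and not only on $(k,j)$; for example, with $k=4$, $j=2$ the terms $M'M'''$ and $(M'')^2$ appear in ratio $4:3$, not $2:1$, but since you only use absolute bounds this changes nothing.
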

\begin{proof}
    We prove the result by induction on \(k\). The base case, \(k=0\),
    is Lemma \ref{Lemma::ProofSmooth::LowerBoundM}. 
    We assume the result for \(k'<k\) and prove it for \(k\geq 1\).
    Since \(M(x)R(x)=1\), we have by the product rule
    \begin{equation}\label{Eqn::PfSmooth::BoundR::Tmp1}
    \begin{split}
         &\partial_x^k R(x) = -\frac{1}{M(x)} \sum_{j=1}^k \binom{k}{j} \left( \partial_x^j M(x) \right)\left( \partial_x^{k-j}R(x) \right).
    \end{split}
    \end{equation}
    Using Lemmas \ref{Lemma::ProofSmooth::LowerBoundM} and \ref{Lemma::ProofSmooth::UpperBoundM},
    and our inductive hypothesis, \eqref{Eqn::PfSmooth::BoundR::Tmp1}
    shows
    \begin{equation*}
    \begin{split}
         &\left| \partial_x^k R(x) \right|
         \lesssim_k e^{A_0 x^{-2/3}} \sum_{j=1}^k x^{-j+1} e^{A_{k-j}x^{-2/3}}
         \lesssim_k e^{A_k x^{-2/3}},
    \end{split}
    \end{equation*}
    provided \(A_k\) is sufficiently large.
\end{proof}

\begin{lemma}\label{Lemma::PfSmooth::Esteta0}
    There exists \(c>0\) such that for \(x\) small, the following holds. 
    For every \(k\),
    \begin{equation*}
        \left| \partial_x^k \eta_0(x) \right|\lesssim_k \exp\left( -cx^{-4/5} \right).
    \end{equation*}
\end{lemma}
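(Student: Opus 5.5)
The plan is to combine Lemma \ref{Lemma::PfSmooth::EstB} with Lemma \ref{Lemma::PfSmooth::BoundR} through the Leibniz rule applied to \(\eta_0=-BR\). The key point is that \(B\) and all its derivatives decay like \(\exp(-c x^{-4/5})\). In contrast, \(R\) and its derivatives grow at most like \(\exp(A_k x^{-2/3})\). Since \(4/5>2/3\), the decay wins, at the cost of shrinking the constant \(c\).

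First, we write out the Leibniz expansion:
\begin{equation*}
    \partial_x^k \eta_0(x) = -\sum_{j=0}^k \binom{k}{j} \left(\partial_x^j B(x)\right)\left(\partial_x^{k-j} R(x)\right).
\end{equation*}
Let \(c_0>0\) be the constant from Lemma \ref{Lemma::PfSmooth::EstB}. Applying that lemma to the \(B\) factors and Lemma \ref{Lemma::PfSmooth::BoundR} to the \(R\) factors gives, for \(x\) small,
\begin{equation*}
    \left|\partial_x^k \eta_0(x)\right| \lesssim_k \exp\left(-c_0 x^{-4/5} + A'_k x^{-2/3}\right), \qquad A'_k:=\max_{j\leq k} A_j.
\end{equation*}

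Next, we absorb the growing term. Because \(x^{-2/3}=x^{2/15}\,x^{-4/5}\), we have \(A'_k x^{-2/3}\leq \frac{c_0}{2}x^{-4/5}\) as soon as \(x^{2/15}\leq c_0/(2A'_k)\), that is, for \(x\leq x_k\) with \(x_k>0\) depending on \(k\). For such \(x\) the bound becomes \(\exp(-\frac{c_0}{2}x^{-4/5})\). So the lemma holds with \(c=c_0/2\), which is independent of \(k\).

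The only obstacle is this \(k\)-dependence of the range. The threshold \(x_k\) depends on \(k\), while the statement fixes ``\(x\) small'' before quantifying over \(k\). We handle the intermediate interval \(x\in[x_k,\delta_0]\) as follows.

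\begin{itemize}
\item On this interval, \(\eta_0\) is smooth: \(B\) and \(M\) are smooth for \(x>0\) and \(M>0\) there.
\item Hence \(|\partial_x^k\eta_0|\) is bounded on the compact interval \([x_k,\delta_0]\).
\item The quantity \(\exp(-c x^{-4/5})\) is bounded below on that interval.
\end{itemize}

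Therefore the estimate holds on \([x_k,\delta_0]\) simply by enlarging the implicit constant \(C_k\), and so it holds on all of \((0,\delta_0]\) with \(\delta_0\) independent of \(k\).

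Here \(\delta_0\) is chosen so that Lemmas \ref{Lemma::ProofSmooth::LowerBoundM} and \ref{Lemma::PfSmooth::BoundR} apply on \((0,\delta_0]\). For Lemma \ref{Lemma::PfSmooth::BoundR}, this means we take the \(k\)-independent smallness range coming from Lemma \ref{Lemma::ProofSmooth::LowerBoundM}, on which the induction holds with constants depending on \(k\).
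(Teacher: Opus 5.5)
Your proof is correct and matches the paper's: apply the Leibniz rule to $\eta_0=-BR$, bound the $B$-factors by Lemma \ref{Lemma::PfSmooth::EstB} and the $R$-factors by Lemma \ref{Lemma::PfSmooth::BoundR}, then absorb $e^{A_k x^{-2/3}}$ into half the decay to get the constant $c/2$. Your separate treatment of the $k$-dependent threshold is valid but not needed. Since $\exp\bigl(A'_k x^{-2/3}-\tfrac{c_0}{2}x^{-4/5}\bigr)$ is bounded on all of $(0,\infty)$, the paper's $\lesssim_k$ absorbs it directly.
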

\begin{proof}
    Letting \(c>0\) be as in Lemma \ref{Lemma::PfSmooth::EstB} and \(A_k\geq 0\)
    be as in Lemma \ref{Lemma::PfSmooth::BoundR}.  We have, by the product rule,
    \begin{equation*}
    \begin{split}
         & \left| \partial_x^k \eta_0(x) \right|
         \lesssim_k \sum_{j=0}^k \left| \partial_x^j B(x) \right| \left| \partial_{x}^{k-j}R(x) \right|
         \lesssim_k \sum_{j=0}^k e^{-c x^{-4/5}} e^{A_k x^{-2/3}}
         \lesssim_k \exp\left( -(c/2) x^{-4/5} \right),
    \end{split}
    \end{equation*}
    completing the proof with \(c\) replaced by \(c/2\).
\end{proof}

\begin{proof}[Proof of Theorem \ref{Thm::Intro::PaleyWiener}\ref{Item::Intro::PaleyWiener::FailureSmooth}]
    Let \(\delta>0\) be small to be chosen in a moment. 
    Let \(\eta_\delta\in \CinftycptSpace[\lbrack 0,1\rbrack]\)
    satisfy \(\eta_\delta(x)=\eta_0(x)\) for \(0\leq x\leq \delta\). 
    In light of Lemma \ref{Lemma::PfSmooth::Esteta0},
    by taking \(\delta>0\)
    small, we can choose \(\eta_\delta(x)\) to be as small as we wish.
    
    Set \(Q(s,x):=1+\eta_\delta(x)b(s)\in \CinftySpace[\lbrack 0,1\rbrack^2]\); we take \(\delta>0\) sufficiently small
    that \(Q(s,x)>0\), \(\forall s,x\).  Since \(b(s)=0\) for \(s\geq 1/3\),
    we have \(Q(s,x)=1\) for \(s\geq 1/3\). 
    All that remains is to show \eqref{Eqn::Intro::PaleyWiener::FailureSmooth::MainVanishing} holds. 
    In fact, we have for \(0<x\leq \delta\),
    \begin{equation*}
    \begin{split}
         &\int_0^1 e^{-s/x} Q(s,x)b(s)\: ds 
         =\int_0^1 e^{-s/x} b(s)\left( 1+\eta_\delta(x)b(s) \right)\: ds
         \\&=\int_0^1 e^{-s/x} b(s)\: ds +\eta_0(x) \int_0^1 e^{-s/x}b(s)^2\: ds
         =B(x)-\frac{B(x)}{M(x)}M(x)=0.
    \end{split}
    \end{equation*}
\end{proof}

\section{Proof of Theorem \texorpdfstring{\ref{Thm::Intro::MainUniqueness}}{\ref*{Thm::Intro::MainUniqueness}}}
We begin with the proof of
Theorem \ref{Thm::Intro::MainUniqueness}\ref{Item::Intro::MainUniqueness::Unique},
which
is a consequence of \cite[Theorem 2.5]{StreetDifferentialEquationsWithDifferenceQuotient}. 
Let \(f(t,x)\) be as in Theorem \ref{Thm::Intro::MainUniqueness}\ref{Item::Intro::MainUniqueness::Unique}
with \(f(0,x)=0\).  We will show \(f(t,x)=0\) \(\forall t,x\).

With \(c(t,x):=\frac{1}{x}\int_0^{\epsilon}e^{-w/x} b(t,w)\: dw\), \(c(t,0)=b(t,0)\),
\eqref{Eqn::Intro::MainDiffEq} becomes
\begin{equation}\label{Eqn::OdePf::TurnIntoStreet}
    \begin{split}
    \frac{\partial}{\partial_t}
    f(t,x)
    &=\frac{c(t,x)f(t,x)-c(t,0)f(t,0)}{x} + \frac{1}{x}e^{-\epsilon/x}G(t,x)f(t,x)
    \\&=:\frac{c(t,x)f(t,x)-c(t,0)f(t,0)}{x} + \Gt(t,x,f(t,x)),
    \end{split}
\end{equation}
where \(\Gt(t,x,y)=\frac{1}{x}e^{-\epsilon/x}G(t,x)y\).
This is of the form covered in \cite{StreetDifferentialEquationsWithDifferenceQuotient}
in the special case \(P(t,x,y,z)=c(t,x)y\); see \cite[(2.1)]{StreetDifferentialEquationsWithDifferenceQuotient}.
Note that \(d_y P(t,0,y,y)=c(t,0)=b(t,0)>0\) and for every \(0<\gamma<\epsilon\),
\begin{equation*}
    \Gt(t,x,y)=e^{-\gamma/x}\left[ \frac{1}{x}e^{-(\epsilon-\gamma)/x}G(t,x)y \right],
\end{equation*}
where the term in \([\cdot ]\) is continuous and Lipschitz in \(y\).
The remaining hypotheses of \cite[Theorem 2.5]{StreetDifferentialEquationsWithDifferenceQuotient}
are immediate; hence that theorem  
applies
(taking \(R_k(t,x)=1\))
and shows that \eqref{Eqn::OdePf::TurnIntoStreet} has uniqueness. 
Since the zero function is also a solution, we conclude
\(f(t,x)=0\), \(\forall t,x\).

In fact, some of the assumptions 
of Theorem \ref{Thm::Intro::MainUniqueness}\ref{Item::Intro::MainUniqueness::Unique} can be relaxed
slightly and \cite[Theorem 2.5]{StreetDifferentialEquationsWithDifferenceQuotient} still gives the desired
uniqueness; we refer the reader to that paper for these and other generalizations.
The notation in that paper is very similar to the notation here, and the reader
will have no trouble seeing Theorem \ref{Thm::Intro::MainUniqueness}\ref{Item::Intro::MainUniqueness::Unique}
as a direct special case.

We turn to Theorem \ref{Thm::Intro::MainUniqueness}\ref{Item::Intro::MainUniqueness::NonUnique},
which we will see is an easy consequence of Theorem \ref{Thm::Intro::PaleyWiener}\ref{Item::Intro::PaleyWiener::FailureSmooth}.
First we present a slightly easier version of 
Theorem \ref{Thm::Intro::MainUniqueness}\ref{Item::Intro::MainUniqueness::NonUnique}. 

\begin{proposition}\label{Prop::ODEPf::EasierProp}
    There exist \(\epsilon>0\) and \(M(t,x),f(t,x)\in \CinftySpace[\lbrack 0,1 \rbrack\times \lbrack 0,\epsilon\rbrack]\)
            satisfying  \eqref{Eqn::Intro::MainDiffEq},
            with \(M(t,x)>0\), \(\forall t,x\),
             \(f(0,x)=0=f(1,x)\), \(\forall x\), and \(f(t,0)\not \equiv 0\).
\end{proposition}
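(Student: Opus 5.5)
We will build the example directly from Theorem \ref{Thm::Intro::PaleyWiener}\ref{Item::Intro::PaleyWiener::FailureSmooth}. Take \(Q(s,x)\) and \(b(s)\in\CinftycptSpace[\lbrack 0,1/3)]\) as given there, with the vanishing \eqref{Eqn::Intro::PaleyWiener::FailureSmooth::MainVanishing} holding for \(0<x\le\delta\). The plan is to choose \(M\) so that the integrating factor of \eqref{Eqn::Intro::MainDiffEq} is exactly \(e^{-s/x}Q(s,x)\), and to take \(f(t,0)=b(t)\).

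\textbf{Choice of \(M\) and the integrating factor.} Define
\[
M(s,x):=1-x\,\partial_s \log Q(s,x)=1-x\frac{\partial_sQ(s,x)}{Q(s,x)}.
\]
Since \(Q\) is smooth and positive on \([0,1]^2\), \(M\) is smooth, and \(M(s,0)=1\). Shrinking \(\epsilon\le\delta\) makes \(x\,|\partial_sQ/Q|<1/2\), so \(M>0\) on \([0,1]\times[0,\epsilon]\). For \(t\le s\) we then have
\[
\exp\Bigl(-\int_t^s M(u,x)/x\,du\Bigr)=e^{-(s-t)/x}\,\frac{Q(s,x)}{Q(t,x)}.
\]

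\textbf{Definition of \(f\) and the boundary conditions.} For \(x>0\), define
\[
f(t,x):=\frac1x\int_t^1 e^{-(s-t)/x}\frac{Q(s,x)}{Q(t,x)}\,b(s)\,ds .
\]
Differentiating in \(t\) shows that \(\partial_t f = \frac{M(t,x)}{x}f(t,x)-\frac{1}{x}b(t)\). Once we check that \(f(t,0)=b(t)\), this is \eqref{Eqn::Intro::MainDiffEq}, because \(M(t,0)=1\). The boundary conditions follow directly. First, \(f(1,x)=0\) since the integral is empty. Second,
\[
f(0,x)=\frac{1}{xQ(0,x)}\int_0^1 e^{-s/x}Q(s,x)b(s)\,ds=0
\]
for \(0<x\le\epsilon\), by \eqref{Eqn::Intro::PaleyWiener::FailureSmooth::MainVanishing}.

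\textbf{Smoothness up to \(x=0\) (main obstacle).} The remaining point, and the one needing care, is that \(f\) extends smoothly to \(x=0\) with \(f(t,0)=b(t)\not\equiv0\). I would substitute \(s=t+xv\) and extend \(b\) by \(0\) beyond \(1/3\); because \(b\) vanishes near \(1\), the upper limit becomes \(\infty\). This gives
\[
f(t,x)=\frac{1}{Q(t,x)}\int_0^\infty e^{-v}\,Q(t+xv,x)\,b(t+xv)\,dv .
\]
Here \(Q(t+xv,x)b(t+xv)\) only matters where \(t+xv<1/3\), so we may first extend \(Q(\cdot,x)\) smoothly past \(s=1\) (it equals \(1\) for \(s\ge 1/3\) anyway). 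Every \((t,x)\)-derivative of the integrand is bounded by \(C_k(1+v)^k e^{-v}\). Hence differentiation under the integral sign is justified on \([0,1]\times[0,\epsilon]\), and \(f\in\CinftySpace[\lbrack0,1\rbrack\times\lbrack0,\epsilon\rbrack]\). Setting \(x=0\) gives \(f(t,0)=b(t)\int_0^\infty e^{-v}dv=b(t)\), which is not identically zero. This also confirms that the \(f(t,0)\) appearing in the equation is \(b\), which completes the argument.
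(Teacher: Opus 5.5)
Your proposal is correct and is essentially the paper's proof. It uses the same $M(s,x)=1-x\,\partial_s\log Q(s,x)$, the same integrating-factor formula for $f$ with $f(t,0)=b(t)$ and $f(1,x)=0$, and the same appeal to the vanishing in Theorem~\ref{Thm::Intro::PaleyWiener}\ref{Item::Intro::PaleyWiener::FailureSmooth} to get $f(0,x)=0$. Your substitution $s=t+xv$ soundly justifies the smoothness up to $x=0$, which the paper only calls ``easy to see (and standard)''.
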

\begin{proof}
    Let \(Q(s,x)\in \CinftySpace[\lbrack 0,1 \rbrack^2]\) and \(b(s)\in \CinftycptSpace[\lbrack 0,1/3)]\)
    be as in Theorem \ref{Thm::Intro::PaleyWiener}\ref{Item::Intro::PaleyWiener::FailureSmooth}.
    Set \(M(s,x):=1-x\frac{\partial}{\partial s} \log Q(s,x)\in \CinftySpace[\lbrack 0,1\rbrack^2 ]\),
    so that \(M(s,x)>0\) for \(x\in [0,\epsilon_1]\) provided \(\epsilon_1>0\) is sufficiently small.
    Set \(f(t,0):=\frac{b(t)}{M(t,0)}=b(t)\) and \(f(1,x):=0\). Finally, set
    \begin{equation*}
        \begin{split}
        f(t,x)&:=\frac{1}{x}\int_t^1 \exp\left( -\int_t^s M(u,x)/x\: du \right) M(s,0)f(s,0)\: ds
        \\&=\frac{1}{xQ(t,x)}\int_{t}^1 e^{-(s-t)/x}Q(s,x)b(s)\:ds.
        \end{split}
    \end{equation*}
    Using the above formula,
    and recalling \(f(t,0)=b(t)\), it is easy to see (and standard) that \(f\in \CinftySpace[\lbrack 0,1\rbrack \times \lbrack 0,\epsilon_1\rbrack ]\)
    and satisfies \eqref{Eqn::Intro::MainDiffEq}.  We have
    \begin{equation}\label{Eqn::ODEProof::Tmp1}
    \begin{split}
         &f(0,x)=\frac{1}{x Q(0,x)}\int_0^1 e^{-s/x}Q(s,x)b(s)\: ds.
    \end{split}
    \end{equation}
    Theorem \ref{Thm::Intro::PaleyWiener}\ref{Item::Intro::PaleyWiener::FailureSmooth}
    shows that \eqref{Eqn::ODEProof::Tmp1} vanishes for \(x\) small.
    Thus, if \(\epsilon\in (0,\epsilon_1)\) is sufficiently small, 
    Theorem \ref{Thm::Intro::PaleyWiener}\ref{Item::Intro::PaleyWiener::FailureSmooth}
    shows \(f(0,x)=0\) for \(x\in [0,\epsilon]\).  The result follows.
\end{proof}

\begin{proof}[Proof of Theorem \ref{Thm::Intro::MainUniqueness}\ref{Item::Intro::MainUniqueness::NonUnique}]
    Letting \(\epsilon>0\), \(M\), and \(f\) be as in Proposition \ref{Prop::ODEPf::EasierProp},
    Theorem \ref{Thm::Intro::MainUniqueness}\ref{Item::Intro::MainUniqueness::NonUnique}
    follows with \(M\) replaced by \(\epsilon^{-1}M(t,\epsilon x)\) and \(f\)
    replaced by \(f(t,\epsilon x)\).
\end{proof}

\section{The proof due to ChatGPT}\label{Section::ChatGPT}
In this section we present the original proof given by ChatGPT verbatim; we have only modified
the formatting. We add our own commentary as footnotes. 
See Remark \ref{Rmk::Intro::OtherWork}
for comparison of this proof with the literature.
In informal tests on March 27, 2026, other frontier LLMs failed to give a proof or gave incorrect proofs;
including ChatGPT-5.2-Pro, ChatGPT-5.4-Pro, Gemini 3 DeepThink, Opus 4.6-Extended, and ChatGPT-5.5-Extended Thinking.
The below ChatGPT-5.5-Pro response was generated on April 22, 2026.

The prompt and proof given here are for a slightly different result than
Theorem \ref{Thm::Intro::PaleyWiener}\ref{Item::Intro::PaleyWiener::FailureSmooth}:
the function $a(s)$ below is constructed to be equal to \(1\) at \(1\),
whereas the similar function \(b(s)\) in Theorem \ref{Thm::Intro::PaleyWiener}\ref{Item::Intro::PaleyWiener::FailureSmooth}
has compact support in \([0,1/3)\).  This is not an essential difference, and merely
required a small additional step in the below proof (namely, the introduction
of the function \(W(x)\) which we took to be zero in Theorem \ref{Thm::Intro::PaleyWiener}\ref{Item::Intro::PaleyWiener::FailureSmooth}).

\begin{promptbox}
don't search the internet.

This is a test to see how well you can craft non-trivial, novel and creative proofs given a ``analysis'' math problem. Provide a full unconditional proof or disproof of the problem.

Problem:
\begin{quote}
``Suppose \(a(s) \in C^\infty([0,1])\) and \(H(s, x) \in C^\infty([0,1] \times [0,1])\) with \(H(s,x) > 0\). \(\forall s, x \in [0,1]\). Suppose,

\[
\sup_{\lambda>1} \left| \lambda \int_0^1 \exp\left(\lambda \int_s^1 H(r, 1/\lambda) dr\right) a(s) ds \right| < \infty.
\]

Must \(a(1) = 0\)? Must \(a \equiv 0\)?''
\end{quote}

Note: Neither \(a\) nor \(H\) is assumed real analytic. If either is real analytic, then \(a \equiv 0\).

\medskip
\noindent\hrulefill

REMEMBER - this unconditional argument may require non-trivial, creative and novel elements.
\end{promptbox}

\subsection*{ChatGPT-5.5-Pro}

\emph{I'll recast the condition as a one-sided Laplace-type transform after a smooth reparameterization, focusing on the dominant left endpoint \(s = 0\) and testing whether smooth non-analytic flatness can still permit cancellation-based counterexamples.}

\medskip

\emph{(Thought for 43m 11s)}

\medskip

No. In the \(C^\infty\) category, neither conclusion is forced. In fact, one can construct \(a, H\) with \(H > 0, a(1) \neq 0\), and the integral equal to 0 for every sufficiently large \(\lambda\).

I will write \(x = 1/\lambda\). The condition becomes a large-growth cancellation condition as \(x \downarrow 0\). The counterexample uses a flat oscillatory function near \(s = 0\) to create a huge ``cancellation reservoir,'' and then uses a flat \(x\)-dependent perturbation of the phase to cancel even a nonzero endpoint contribution from \(a(1)\).

\subsubsection*{Disproof}

Choose \(\chi \in C^\infty([0, 1])\) such that
\[
\chi(s) = 1 \quad (0 \leq s \leq 1/4), \qquad \chi(s) = 0 \quad (s \geq 1/3).
\]
For \(s > 0\), define
\[
b(s) = \chi(s) e^{-1/s^2} \sin(s^{-4}),
\]
and set \(b(0) = 0\). Then \(b \in C^\infty([0, 1])\), and \(b\) is flat at 0.

Choose \(w \in C^\infty([0, 1])\) with
\[
w(s) = 0 \quad (s \leq 1/2), \qquad w(1) = 1.
\]
Now define
\[
a(s) = b(s) + w(s).
\]
Then
\[
a(1) = 1,
\]
so this will disprove even the weaker claim \(a(1) = 0\), once \(H\) is constructed.

For \(x > 0\), define
\[
B(x) = \int_0^1 e^{-s/x} b(s) \, ds,
\]
\[
M(x) = \int_0^1 e^{-s/x} b(s)^2 \, ds,
\]
and
\[
W(x) = \int_0^1 e^{-s/x} w(s) \, ds.
\]

We will use the smooth flat function
\[
\eta_0(x) = - \frac{B(x) + W(x)}{M(x)}
\]
near \(x = 0\). The key point is that \(\eta_0\) extends smoothly to \(x = 0\) with all derivatives 0. I prove that next.

\subsubsection*{The key estimate}

First, \(W\) is exponentially small at rate \(e^{-c/x}\), because \(w\) is supported away from 0. Indeed, for every \(k\),
\[
W^{(k)}(x) = O(e^{-c/x})
\]
as \(x \downarrow 0\).

The more important part is the oscillatory term \(B\). Since \(\chi = 1\) near 0, the main contribution\footnote{That \(\int_0^{1/4}\) is the main contribution (up to appropriate \(O(e^{-c/x})\) errors) is almost immediate.
See the estimates for \(B_1\) in the proof of Lemma \ref{Lemma::PfSmooth::EstB}.}
 is
\[
\int_0^{1/4} e^{-s/x} e^{-1/s^2} \sin(s^{-4}) \, ds.
\]
Write \(u = 1/s\). Up to an error \(O(e^{-c/x})\), this\footnote{This step is needlessly confusing. 
In the previous sentence, we took just the ``main contribution,'' and now it says ``Up to an error,''
but the error was when we moved to the ``main contribution.'' 
So long as we take \(R=4\),
this formula is exactly equal to the ``main contribution.''  There is no added convenience
in picking an \(R\) other than \(4\). If one wished to pick an \(R\)
bigger than \(4\), one could just change \(1/4\) to \(1/R\) when we reduced to the
``main contribution.''}
 is the imaginary part of
\[
I_+(x) = \int_R^\infty u^{-2} e^{-u^2 - 1/(xu)} e^{i u^4} \, du
\]
for some fixed \(R > 0\), plus the analogous integral\footnote{The proof does not 
use the ``analogous integral;'' see the writeup of Lemma \ref{Lemma::PfSmooth::EstB}. One
could use this analogous integral to take the imaginary part by considering
\((I_+(x)-I_{-}(x))/2i\), but this is not necessary, and is not done below.}
 with \(e^{-i u^4}\).

For \(I_+\), rotate the contour to the ray \(u = r e^{i\pi/8}\). On that ray,
\[
\text{Re}\left( -u^2 - \frac{1}{xu} + i u^4 \right) = -r^2 \cos(\pi/4) - \frac{\cos(\pi/8)}{xr} - r^4.
\]
Hence\footnote{Here the details of this change in contour have been completely skipped.
See Lemma \ref{Lemma::PfSmooth::EstB} for the needed details.  Importantly,
the term \(J(x)\) from the proof of Lemma \ref{Lemma::PfSmooth::EstB} has been completely ignored;
this term arises from the change in contour and is the contribution from 
\(\Gamma_{\mathrm{in}}\); see Figure \ref{Figure::Countour}.
Since we need to not only bound \(J(x)\), but also bound its derivatives, this term should be mentioned.}
\[
|I_+(x)| \leq C \int_R^\infty e^{-r^4 - c/(xr)} \, dr \leq C e^{-cx^{-4/5}}.
\]
The connecting arc contributes only \(O(e^{-c/x})\). The same argument, rotating to \(-\pi/8\), treats the \(e^{-i u^4}\) term. 
Differentiating in \(x\) only introduces powers\footnote{This is the right idea, 
but is sweeping many details under the rug, 
and not paying attention to how the derivatives hit the inner part of the contour. 
One needs to prove the estimate for each derivative; one cannot directly deduce
the estimate for derivatives from the estimate without derivatives. See the proof of Lemma \ref{Lemma::PfSmooth::EstB} for the needed details.} of \(x^{-1}\) and \(u^{-1}\), so for every \(k\),
\[
B^{(k)}(x) = O(e^{-cx^{-4/5}}).
\]

Now we need a lower bound for \(M\). In the region where \(\chi = 1\), using again \(u = 1/s\),
\[
M(x) \geq \int_R^\infty u^{-2} e^{-2u^2 - 1/(xu)} \sin^2(u^4) \, du.
\]
The nonoscillatory exponent
\[
2u^2 + \frac{1}{xu}
\]
is minimized at \(u_0 = (4x)^{-1/3}\), and the minimum is \(O(x^{-2/3})\). Near \(u_0\), within distance \(O(x)\), there is a point where \(\sin^2(u^4) = 1\). On a subinterval of length \(cx\), we have \(\sin^2(u^4) \geq 1/2\), while
\[
2u^2 + \frac{1}{xu} \leq Cx^{-2/3}.
\]
Therefore\footnote{This is basically correct, 
except (as seen in the proof of Lemma \ref{Lemma::ProofSmooth::LowerBoundM}) 
it would be more direct to say that this proof gives \(M(x) \geq c x^{5/3} e^{-Cx^{-2/3}}\), 
which still implies the result by changing \(C,c\).}
\[
M(x) \geq c e^{-Cx^{-2/3}}
\]
for all sufficiently small \(x > 0\).

Combining the estimates,
\[
B^{(k)}(x) = O(e^{-cx^{-4/5}}), \qquad W^{(k)}(x) = O(e^{-c/x}), \qquad M(x) \geq e^{-Cx^{-2/3}}.
\]

Because \(4/5 > 2/3\), the quotient
\[
\eta_0(x) = - \frac{B(x) + W(x)}{M(x)}
\]
is flat at 0. More explicitly,\footnote{While the above estimates give the desired estimate for \(\eta_0\),
they do not give the desired estimate for derivatives of \(\eta_0\). For the additional estimates needed,
see Lemmas \ref{Lemma::ProofSmooth::UpperBoundM} and \ref{Lemma::PfSmooth::BoundR}, which
were not mentioned in this proof.}
 for every \(N\),
\[
\eta_0^{(k)}(x) = O(x^N)
\]
as \(x \downarrow 0\). Thus \(\eta_0\) extends to a \(C^\infty\) function near 0 by setting
\[
\eta_0(0) = 0.
\]

Now choose \(\varepsilon > 0\) small and extend \(\eta_0\) to a smooth function \(\eta \in C^\infty([0, 1])\) such that
\[
\eta(x) = \eta_0(x) \quad (0 \leq x \leq \varepsilon),
\]
and such that \(\eta\) remains sufficiently small. Define
\[
q(s, x) = 1 + \eta(x) b(s).
\]

For \(\varepsilon\) small enough,
\[
q(s, x) > 0
\]
on \([0, 1]^2\). Also \(b(1) = 0\), so
\[
q(1, x) = 1.
\]
For \(0 < x \leq \varepsilon\), we have
\[
\int_0^1 e^{-s/x} a(s) q(s, x) \, ds = \int_0^1 e^{-s/x} (b + w)(1 + \eta b) \, ds.
\]
Since \(b\) and \(w\) have disjoint supports,
\[
\int_0^1 e^{-s/x} w(s) b(s) \, ds = 0.
\]
Thus
\[
\int_0^1 e^{-s/x} a(s) q(s, x) \, ds = B(x) + W(x) + \eta(x) M(x).
\]
But \(\eta = \eta_0 = -(B + W)/M\) for \(x \leq \varepsilon\), so
\[
\int_0^1 e^{-s/x} a(s) q(s, x) \, ds = 0
\]
for all sufficiently small \(x > 0\).
Now define
\[
H(s, x) = 1 - x \partial_s \log q(s, x).
\]
Because \(q > 0\) and \(q \in C^\infty([0, 1]^2)\), we have \(H \in C^\infty([0, 1]^2)\). By taking \(\varepsilon\) smaller if needed, \(x \partial_s \log q\) is uniformly small, so
\[
H(s, x) > 0
\]
on \([0, 1]^2\).
Finally, since \(q(1, x) = 1\),
\[
\int_s^1 H(r, x) \, dr = \int_s^1 (1 - x \partial_r \log q(r, x)) \, dr = 1 - s + x \log q(s, x).
\]
Therefore, for \(x \leq \varepsilon\),
\[
\int_0^1 \exp\left( \frac{1}{x} \int_s^1 H(r, x) \, dr \right) a(s) \, ds = \int_0^1 e^{(1-s)/x} q(s, x) a(s) \, ds.
\]
Hence
\[
\int_0^1 \exp\left( \frac{1}{x} \int_s^1 H(r, x) \, dr \right) a(s) \, ds = e^{1/x} \int_0^1 e^{-s/x} a(s) q(s, x) \, ds = 0.
\]
Returning to \(\lambda = 1/x\), this says that for all sufficiently large \(\lambda\),
\[
\lambda \int_0^1 \exp\left( \lambda \int_s^1 H(r, 1/\lambda) \, dr \right) a(s) \, ds = 0.
\]
For \(1 < \lambda \leq 1/\varepsilon\), the same expression is continuous on a compact interval, hence bounded. Thus
\[
\sup_{\lambda>1} \left| \lambda \int_0^1 \exp\left( \lambda \int_s^1 H(r, 1/\lambda) \, dr \right) a(s) \, ds \right| < \infty.
\]
But \(a(1) = 1\). Therefore:
\begin{quote}
\textbf{No, \(a(1)\) need not be \(0\).}
\end{quote}
And since \(a(1) = 1\), certainly
\begin{quote}
\textbf{No, \(a\) need not be identically \(0\).}
\end{quote}

The construction is genuinely \(C^\infty\)-nonanalytic: the flat oscillatory term near \(s = 0\) and the flat \(x\)-dependent correction in \(H\) are precisely what analytic hypotheses would forbid.

\bibliographystyle{amsalpha}

\bibliography{bibliography}

\center{\it{University of Wisconsin-Madison, Department of Mathematics, 480 Lincoln Dr., Madison, WI, 53706}}

\center{\it{street@math.wisc.edu}}

\center{MSC 2020: 34A55 (Primary), 34A12, 35A02, 44A10 (Secondary)}

\end{document}